\documentclass[
final
]{dmtcs-episciences}


\usepackage[utf8]{inputenc}
\usepackage{amsmath,amsfonts,amsthm,amssymb}

\newtheorem{theorem}{Theorem}

\newtheorem{proposition}[theorem]{Proposition}

%

\usepackage[round]{natbib}

\title[On polynomials associated to Voronoi diagrams]{On polynomials associated to Voronoi diagrams of point sets and crossing numbers}

\author[Merc\`e Claverol et al.]
{
    Merc\`e Claverol\affiliationmark{1}\thanks{merce.claverol@upc.edu}
    \and Andrea de las Heras-Parrilla\affiliationmark{1}\thanks{andrea.de.las.heras@estudiantat.upc.edu}\\
    \and David Flores-Pe\~{n}aloza\affiliationmark{2}\thanks{dflorespenaloza@ciencias.unam.mx}
    \and Clemens Huemer\affiliationmark{1}\thanks{clemens.huemer@upc.edu}
    \and David Orden\affiliationmark{3}\thanks{david.orden@uah.es}
}
  
\affiliation{
  Universitat Polit\`ecnica de Catalunya, Barcelona, Spain\\
  Facultad de Ciencias, Universidad Nacional Aut\'onoma de M\'exico,  Ciudad de M\'exico, Mexico\\
  Universidad de Alcal\'a, Alcal\'a de Henares, Spain}
  
\keywords{Voronoi diagrams, (at most $k$)-edges, Crossing numbers, Roots of polynomials}

\begin{document}


\publicationdata{vol. 26:2}{2024}{14}{10.46298/dmtcs.12443}{2023-10-19; 2023-10-19; 2024-05-08}{2024-05-20}

\maketitle
\begin{abstract}    
    Three polynomials are defined for given sets $S$ of $n$ points in general position in the plane: The Voronoi polynomial with coefficients the numbers of vertices of the order-$k$ Voronoi diagrams of~$S$, the circle polynomial with coefficients the numbers of circles through three points of $S$ enclosing $k$ points of $S$, and the $E_{\leq k}$ polynomial with coefficients the numbers of (at most $k$)-edges of~$S$. We present several formulas for the rectilinear crossing number of $S$ in terms of these polynomials and their roots.
    We also prove that the roots of the Voronoi polynomial lie on the unit circle if, and only if, $S$ is in convex position. Further, we present bounds on the location of the roots of these polynomials.
\end{abstract}

\section{Introduction}

Let $S$ be a set of $n \geq 4$ points in general position in the plane, meaning that no three points of $S$ are collinear and no four points of $S$ are cocircular. 
The Voronoi diagram of order $k$ of $S$, $V_k(S)$, is a subdivision of the plane into cells such that points in the same cell have the same $k$ nearest points of $S$.
Voronoi diagrams have found many applications in a wide range of disciplines, see e.g.~\cite{A91, OBS00}. 
We define the \emph{Voronoi polynomial} $p_V(z)=\sum_{k=1}^{n-1} v_k z^{k-1}$, where $v_k$ is the number of vertices of $V_k(S)$.  

Proximity information among the points of $S$ is also encoded by the circle polynomial of $S$, which we define as $p_C(z)=\sum_{k=0}^{n-3} c_k z^k$, where $c_k$ denotes the number of circles passing through three points of $S$ that enclose exactly $k$ other points of $S$. 

The numbers $v_k$ and $c_k$ are related via the well-known relation
\begin{equation}\label{rel:ckvk}
v_k = c_{k-1}+c_{k-2}
\end{equation}
where $c_{-1}=0$ and $c_{n-2}=0$, see e.g. Remark 2.5 in~\cite{L03}.
These two polynomials $p_V(z)$ and $p_C(z)$ are especially interesting due to their connection to the prominent rectilinear crossing number problem. 

The rectilinear crossing number of a point set $S$, $\overline{cr}(S)$, is the number of pairwise edge crossings of the complete graph $K_n$ when drawn with straight-line segments on $S$, i.e. the vertices of $K_n$ are the points of $S$. 
Equivalently, $\overline{cr}(S)$ is the number of convex quadrilaterals with vertices in $S$. 
We denote $\overline{cr}(S)$ as $\alpha {{n}\choose{4}}$, with $0 \leq \alpha \leq 1$. Note that for $S$ in convex position, $\alpha=1$.  
The rectilinear crossing number problem consists in, for each $n$, finding the minimum value of $\overline{cr}(S)$ among all sets $S$ of $n$ points, no three of them collinear. 
This minimum is commonly denoted as $\overline{cr}(K_n)$. The limit of ${\overline{cr}(K_n)}/{{n}\choose{4}}$, when $n$ tends towards infinity, is the so-called rectilinear crossing number constant $\alpha^*$. This problem is solved  for $n\leq 27$ and $n=30$, and the current best bound for the rectilinear crossing number constant is $\alpha^* > 0,37997$; for more information, see the survey of~\cite{AFS12} and the web page~\cite{Aurl}.  
A fruitful approach to the rectilinear crossing number problem is proving bounds on the numbers of $j$-edges and of $(\leq k)$-edges of $S$, see~\cite{AC12, AF05, AGOR07, BS06, LVW04}. An (oriented) $j$-edge of $S$ is a directed straight line $\ell$ passing through two points of $S$ such that the  open half-plane bounded by $\ell$ and on the right of $\ell$ contains exactly $j$ points of $S$.  The number of $j$-edges of $S$ is denoted by $e_j$, and  $E_{\leq k} = \sum_{j=0}^{k} e_j$ is the number of $(\leq k)$-edges. We then consider the $E_{\leq k}$ polynomial $p_E(z)=\sum_{k=0}^{n-3} E_{\leq k} z^k$, which also encodes information on higher order Voronoi diagrams, since the number of $j$-edges $e_j$ is the number of unbounded cells of the order-$(j+1)$ Voronoi diagram of $S$, see e.g. Proposition 30 in~\cite{CdH21}. Note that $p_E(z)$ has no term $E_{\leq n-2}$.

For an illustration of the defined polynomials for a particular point set, see Figure~\ref{fig:1509}.

\begin{figure}[h!]
	\centering
		\includegraphics[scale=0.28,page=1]{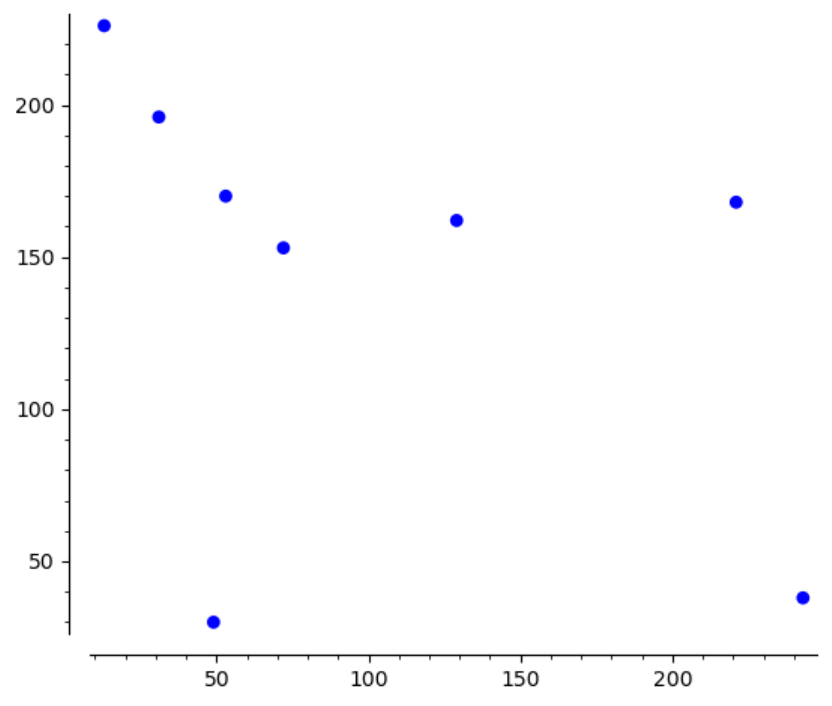}\quad
		\includegraphics[scale=0.28,page=2]{fig1509.pdf}\quad
	\includegraphics[scale=0.28,page=3]{fig1509.pdf}
	\caption{Left: 1591-th entry of the order type database for 8 points, from ~\cite{OrderTypesDBurl}.  With complex stream plots of its Voronoi polynomial (center): $p_V(z)=10 + 23z + 27z^2 + 24z^3 + 17z^4  + 9z^5 + 2z^6$, and its $E_{\leq k}$ polynomial (right): $p_E(z)=4 + 13z + 22z^2 + 34z^3 + 43z^4 + 52z^5$; roots are red points.
	}
	\label{fig:1509}
\end{figure}

For a point set $S$, we show that $\overline{cr}(S)$ appears in the first derivatives of these three polynomials when evaluated at $z=1$ and, in addition, we obtain appealing formulas for $\overline{cr}(S)$ in terms of the roots of the polynomials. Motivated by this, we study the location of such roots, showing several bounds on their modulus. As a particular result, we also prove that the roots of the Voronoi polynomial lie on the unit circle, $\{z:|z|=1\}$, if, and only if, $S$ is in convex position. 
Polynomials with zeros on the unit circle have been studied for instance in \cite{CH20, Chen95, LL04, LS13}.
  
Furthermore,  the circle polynomial  comes into play when considering the random variable $X$ that counts the number of points of $S$ enclosed by the circle defined by three points chosen uniformly at random from~$S$. The probability generating function of $X$ is $p_C(z)/{{n}\choose{3}}$. In~\cite{MS19} a central limit theorem 
for random variables with values in $\{0,\ldots,n\}$ was shown, under the condition that the variance is large enough and that no root of the probability generating function is too close to $1 \in \mathbb{C}.$ We show that the random variable $X$ does not approximate a normal distribution, and use the result from~\cite{MS19} to derive that $p_C(z)$ has a root close to $1 \in \mathbb{C}.$  

We first state in Section~\ref{sec:known} the known relations for Voronoi diagrams, circles enclosing points, and $j$-edges, that we will use. 
Then, in Section~\ref{sec:polynomials}, we apply them to obtain properties of the three polynomials.
Section \ref{sec:rootsOfPolynomials} is on the roots of the polynomials. 
Finally, Section~\ref{sec:discussion} discusses open problems, the related polynomial of $j$-edges, and conclusions.

Throughout this work, points $(a,b)$ in the plane are identified with complex numbers $z=a+ib$. To avoid cumbersome notation we omit indicating the point set $S$ where it is clear from context; for example, we write $p_C(z)$ instead of $p^S_C(z)$.

\section{Known relations}\label{sec:known}

In this section, necessary results for the current paper 
are presented. 
These consist in the relations between
 the number $c_k$ of circles enclosing $k$ points of $S$, the crossing number $\overline{cr}(S)$, the number  $E_{\leq k}$ of (at most $k$)-edges of $S$, and the number $v_k$ of vertices of the Voronoi diagram of order $k$ of $S$.

A main source is the work by \cite{L82}, from where several of the following formulas can be obtained.

\begin{itemize}
    \item
    For any point set $S$, and $0\leq k\leq n-3$, it holds that, see~\cite{A04, CS89, CdH21, L82,L03}, 
    \begin{equation}\label{eqpairs}
        c_k+c_{n-k-3}=2(k+1)(n-k-2).
    \end{equation}

    \item
    From~\cite{circle12} we get the following two equations.
    \begin{equation}\label{eq:main}
        \sum_{k=0}^{n-3} k \cdot c_k =
        {{n}\choose{4}}+\overline{cr}(S)= (1+\alpha){{n}\choose{4}}.
    \end{equation}
    This was essentially also obtained in~\cite{U04}, though not stated in terms of $\overline{cr}(S)$.
    
    \begin{equation}\label{eq:mainsquare}
        \sum_{k=0}^{n-3} k^2 \cdot c_k ={{n}\choose{5}}+{{n}\choose{4}}+(n-3)\overline{cr}(S).
    \end{equation}

    \item 
    For $k \leq \frac{n-3}{2}$ it holds that, see Lemma 3.1 in~\cite{C19},
        \begin{equation}\label{inequ1}
            c_k \geq (k+1)(n-k-2)
        \end{equation}
        and
        \begin{equation}\label{inequ2}
            c_{n-k-3} \leq (k+1)(n-k-2).
        \end{equation}
        
    \item 
    For every set $S$ of $n$ points in general position, the relation between $E_{\leq k}$  and $c_k$ is, see e.g. \\ Property~33 in~\cite{CdH21},
    \begin{equation}\label{circandE}
    	c_k + E_{\leq k}=(k+1)(2n-k-2).
    \end{equation}
    For a point set $S$ in convex position, we have equality in Equations~(\ref{inequ1}) and~(\ref{inequ2}) which can be derived from Equations~(\ref{circandE}) and $E_ {\leq k} =(k+1)n$ for $S$ in convex position,
    and therefore
    \begin{equation}\label{eqn:concave}
        2c_k = c_{k-1} + c_{k+1} + 2.		
    \end{equation}
    Then,  the number of vertices of $V_k(S)$, for $S$ in convex position fulfills, see e.g.~Property 34, Equation (4) in \cite{CdH21},
    \begin{equation}\label{eqn:numverticespv}
        v_k=c_{k-1}+c_{k-2}=(2k-1)n - 2k^2\ .
    \end{equation}
    
    Note that, for $S$ in convex position this implies that
    \begin{equation}\label{eq:palindromic}
        v_k=v_{n-k}.
    \end{equation}

\end{itemize}

\section{Properties of the Voronoi, circle and $E_{\leq_k}$ polynomials}\label{sec:polynomials}

In the following section, we will show the relations between the Voronoi and the circle polynomials, $p_V(z)$ and $p_C(z)$. 
Some additional properties for $p_V(z), p_C(z)$ and $p_E(z)$ will also be presented. 
Finally, we will give a family of formulas for the crossing number in terms of the coefficients of these three polynomials.

\begin{proposition}\label{prop:equalpoly}
    For every set $S$ of $n$ points in general position, the circle polynomial $p_C(z)=\sum_{k=0}^{n-3} c_k z^{k}$ and the Voronoi polynomial $p_V(z)=\sum_{k=1}^{n-1} v_k z^{k-1}$ satisfy: 
    \begin{equation}
        p_V(z) = (1+z) p_C(z).
    \end{equation}
\end{proposition}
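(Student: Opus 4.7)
The plan is to simply unfold both sides using the already-cited identity $v_k = c_{k-1} + c_{k-2}$ (Equation~(\ref{rel:ckvk})), together with the boundary conventions $c_{-1}=0$ and $c_{n-2}=0$. Since both polynomials are built directly from the numbers $c_k$ and $v_k$, the statement should reduce to a one-line bookkeeping argument on coefficients, so there is no genuine obstacle, only an indexing check.

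Concretely, I would first expand the right-hand side as
\begin{equation*}
(1+z)p_C(z) = \sum_{k=0}^{n-3} c_k z^k + \sum_{k=0}^{n-3} c_k z^{k+1}.
\end{equation*}
In the second sum I substitute $j=k+1$, obtaining $\sum_{j=1}^{n-2} c_{j-1} z^j$. In the first sum I extend the upper index from $n-3$ to $n-2$ at no cost because $c_{n-2}=0$, and similarly extend the second sum down to $j=0$ using $c_{-1}=0$. Both sums then run over $0 \le j \le n-2$ and combine into $\sum_{j=0}^{n-2} (c_j + c_{j-1})z^j$.

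Next I invoke relation~(\ref{rel:ckvk}) in the form $c_j + c_{j-1} = v_{j+1}$, so the expression becomes $\sum_{j=0}^{n-2} v_{j+1}z^j$. A final reindexing $k=j+1$ gives $\sum_{k=1}^{n-1} v_k z^{k-1} = p_V(z)$, which is exactly the left-hand side.

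The only thing to be a little careful about is matching the index ranges at both endpoints: one must check that $v_1 = c_0$ and $v_{n-1} = c_{n-3}$ are produced correctly by the expansion, which is precisely what the conventions $c_{-1}=0$ and $c_{n-2}=0$ guarantee. Beyond that, the proof is a direct manipulation of generating functions and requires no further input.
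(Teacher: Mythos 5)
Your proof is correct and is essentially the same as the paper's: both rest entirely on the identity $v_k = c_{k-1}+c_{k-2}$ with the conventions $c_{-1}=c_{n-2}=0$, differing only in that you expand $(1+z)p_C(z)$ and reindex toward $p_V(z)$ while the paper unfolds $p_V(z)$ and regroups it as $p_C(z)+z\,p_C(z)$. No gap; nothing further needed.
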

\begin{proof}
    This follows from the property $v_k = c_{k-1}+c_{k-2},$ for $1\leq k \leq n-1$, where $c_{-1}=0$ and $c_{n-2}=0$.
    Then, \[p_V(z)=c_0+ \sum_{k=2}^{n-2} (c_{k-1}+c_{k-2})z^{k-1}+ c_{n-3}z^{n-2} = p_C(z) + z\cdot p_C(z).\]
\end{proof}

\begin{proposition}\label{prop:circleeq}
    For every set $S$ of $n$ points in general position,
    the circle polynomial $p_C(z)=\sum_{k=0}^{n-3} c_k z^{k}$ satisfies:
    \begin{enumerate}
        \item $p_C(1)={{n}\choose{3}}.$\label{pc1}
        \item $p'_C(1)= {{n}\choose{4}}+ \overline{cr}(S).$
        \item $p''_C(1)={{n}\choose{5}}+(n-4)\overline{cr}(S).$
        \item $p_C(-1)=\frac{n-1}{2}$ for $n$ odd. \label{pc-1}
    \end{enumerate}
\end{proposition}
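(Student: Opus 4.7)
Items (1)--(3) follow directly from evaluating $p_C$ and its derivatives at $z=1$. For (1), I will observe that $p_C(1)=\sum_{k=0}^{n-3} c_k$ counts every triple of points of $S$ exactly once (via the unique circle it determines), giving $\binom{n}{3}$. For (2), termwise differentiation yields $p'_C(1)=\sum_{k=0}^{n-3} k\,c_k$, which is precisely Equation~(\ref{eq:main}). For (3), differentiating twice gives $p''_C(1)=\sum_{k=0}^{n-3} k(k-1)c_k=\sum k^{2}c_k-\sum k\,c_k$; subtracting Equation~(\ref{eq:main}) from Equation~(\ref{eq:mainsquare}) makes the $\binom{n}{4}$ terms cancel and leaves $\binom{n}{5}+(n-4)\,\overline{cr}(S)$.

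The substantive case is (4). The plan is to exploit the symmetric identity~(\ref{eqpairs}), $c_k+c_{n-k-3}=2(k+1)(n-k-2)$, by multiplying through by $(-1)^k$ and summing over $k=0,\dots,n-3$. Because $n$ is odd, $n-3$ is even, so the substitution $k\mapsto n-k-3$ in the second sum preserves the sign $(-1)^k$; both pieces then equal $p_C(-1)$ and the left-hand side collapses to $2\,p_C(-1)$. This leaves the purely numerical identity
\[
p_C(-1)=\sum_{k=0}^{n-3}(-1)^k(k+1)(n-k-2),
\]
which must be shown to equal $(n-1)/2$.

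The main obstacle is evaluating this alternating sum. I would carry it out by Abel summation: setting $b_k=(k+1)(n-k-2)$ and $A_k=\sum_{j\le k}(-1)^j\in\{0,1\}$, a short calculation gives the linear first difference $b_{k+1}-b_k=n-2k-4$, together with $b_{n-3}=n-2$; since $A_k$ vanishes on odd indices and $A_{n-3}=1$ (as $n-3$ is even), Abel's formula reduces the target sum to $b_{n-3}$ minus an arithmetic progression indexed by even $k\le n-5$, which has exactly $(n-3)/2$ terms averaging $1$. The difference is $(n-2)-(n-3)/2=(n-1)/2$, as required. An equally viable alternative is induction on odd $n$, stepping from $n$ to $n+2$ via $(k+1)(n-k)=(k+1)(n-k-2)+2(k+1)$ to isolate a simple alternating linear sum at each step.
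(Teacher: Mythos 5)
Your proposal is correct and follows essentially the same route as the paper: items (1)--(3) are the same direct evaluations using Equations~(\ref{eq:main}) and~(\ref{eq:mainsquare}), and item (4) rests on the same symmetric pairing from Equation~(\ref{eqpairs}) (the paper pairs $k$ with $n-k-3$ inside the sum and isolates the middle coefficient, while you sum the identity against $(-1)^k$ to get $2p_C(-1)$ on the left -- an equivalent bookkeeping). Your Abel-summation evaluation of the resulting alternating sum is a correct filling-in of a computation the paper leaves implicit.
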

\begin{proof}
  Since there are ${{n}\choose{3}}$ different circles passing through three different points from $S$, the first claim follows.
  The second one follows from Equation~(\ref{eq:main}).
  The third equality follows from Equations~(\ref{eq:main}) and~(\ref{eq:mainsquare}).
  Finally, we use Equation~(\ref{eqpairs}) to obtain the fourth equality; note that from Equation~(\ref{eqpairs}), for $n$ odd it follows that $c_{\frac{n-3}{2}}=\frac{(n-1)^2}{4}$. 
  \[p_C(-1)=\sum_{k=0}^{n-3}(-1)^k c_k=\sum_{k=0}^{\frac{n-3}{2}-1}\left((-1)^k 2(k+1)(n-2-k)\right)+(-1)^{\frac{n-3}{2}}c_{\frac{n-3}{2}}=\frac{n-1}{2}.\]
\end{proof}

When we consider the Voronoi polynomial 
we get:

\begin{proposition}
    For every set $S$ of $n$ points in general position, the Voronoi polynomial \\ $p_V(z)=\sum_{k=1}^{n-1} v_k z^{k-1}$ satisfies:
    \begin{enumerate}
        \item $p_V(1)=2{{n}\choose{3}}.$
        \item $p'_V(1)= {{n}\choose{3}}+2{{n}\choose{4}} + 2\overline{cr}(S).$
        \item $p''_V(1)=2{{n}\choose{4}}+2{{n}\choose{5}}+2(n-3)\overline{cr}(S).$
        \item $p_V(-1)=0.$
        \item $p'_V(-1)=\frac{n-1}{2}$ for $n$ odd.
    \end{enumerate}
\end{proposition}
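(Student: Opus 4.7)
The plan is to piggyback entirely on Proposition~\ref{prop:equalpoly} and Proposition~\ref{prop:circleeq}. By Proposition~\ref{prop:equalpoly} we have the factorization $p_V(z) = (1+z)\,p_C(z)$, so differentiating gives
\[
p'_V(z) = p_C(z) + (1+z)\,p'_C(z), \qquad p''_V(z) = 2p'_C(z) + (1+z)\,p''_C(z).
\]
Each of the five claimed values is then obtained by substituting $z = 1$ or $z = -1$ and invoking the corresponding part of Proposition~\ref{prop:circleeq}.

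For the first three items, I substitute $z=1$, noting that $1+z=2$ there. Item~1 is immediate: $p_V(1) = 2\,p_C(1) = 2\binom{n}{3}$. Item~2 follows from $p'_V(1) = p_C(1) + 2p'_C(1)$, plugging in Proposition~\ref{prop:circleeq}(1)--(2) and collecting terms. Item~3 uses $p''_V(1) = 2p'_C(1) + 2p''_C(1)$, invoking parts~(2) and~(3) of Proposition~\ref{prop:circleeq}, after which the coefficient of $\overline{cr}(S)$ becomes $2 + 2(n-4) = 2(n-3)$ and one reads off the rest.

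For items~4 and~5, I substitute $z=-1$, where the factor $1+z$ vanishes. Item~4 is then immediate: $p_V(-1) = 0\cdot p_C(-1) = 0$. For item~5, the product rule gives $p'_V(-1) = p_C(-1) + 0\cdot p'_C(-1) = p_C(-1)$, which by Proposition~\ref{prop:circleeq}(\ref{pc-1}) equals $\tfrac{n-1}{2}$ when $n$ is odd.

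All the genuine combinatorial content (the identities linking $c_k$, $v_k$, and $\overline{cr}(S)$) has already been packaged into the two previous propositions, so I do not anticipate any real obstacle here; the only thing to watch is bookkeeping of the coefficient in front of $\overline{cr}(S)$ in item~3.
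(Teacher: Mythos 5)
Your proposal is correct and follows exactly the paper's own argument: differentiate the factorization $p_V(z)=(1+z)p_C(z)$ from Proposition~\ref{prop:equalpoly} and substitute $z=\pm 1$ using Proposition~\ref{prop:circleeq}, with the same bookkeeping $2+2(n-4)=2(n-3)$ in item~3. Nothing to add.
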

\begin{proof}
    This follows from Proposition~\ref{prop:equalpoly} and Proposition~\ref{prop:circleeq}. From $p_V(z) = (1+z) p_C(z)$, we get
    \[p'_V(z)=p_C(z) + (1+z)\cdot p'_C(z)\] and
    \[p''_V(z)=2p'_C(z) + (1+z)\cdot p''_C(z).\]
    Then just substitute from Proposition~\ref{prop:circleeq}.
    The fourth property follows directly from Proposition~\ref{prop:equalpoly} and was already noted by~\cite{L03}. The last property follows from taking derivatives in Proposition~\ref{prop:equalpoly}.
\end{proof}

When we consider the $E_{\leq_k}$ polynomial 
we get:
\begin{proposition}\label{prop:edgepoly}
    For every set $S$ of $n$ points in general position, the $E_{\leq_k}$ polynomial $p_E(z)=\sum_{k=0}^{n-3} E_{\leq_k} z^{k}$ satisfies:
    \begin{enumerate}
        \item $p_E(1)=3{{n}\choose{3}}.$
        \item $p'_E(1)=
        9{{n}\choose{4}}- \overline{cr}(S).$
        \item $p''_E(1)=
        35{{n}\choose{5}}-(n-4)\overline{cr}(S).$
        \item $p_E(-1)=\frac{n(n-1)}{2}$ for $n$ odd.
    \end{enumerate}
\end{proposition}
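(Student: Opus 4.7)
The plan is to reduce everything to Proposition~\ref{prop:circleeq} via the key identity~(\ref{circandE}), which gives $E_{\leq k}=(k+1)(2n-k-2)-c_k$. Introducing the auxiliary polynomial
\[
Q(z)=\sum_{k=0}^{n-3}(k+1)(2n-k-2)\, z^k,
\]
this identity yields the clean decomposition $p_E(z)=Q(z)-p_C(z)$. Each of the four claims then becomes an evaluation of $Q$ (or one of its derivatives) at $z=1$ or $z=-1$, minus the already-known value of $p_C$ or its derivatives from Proposition~\ref{prop:circleeq}. In particular, every occurrence of $\overline{cr}(S)$ on the right-hand side of items~2 and~3 will come entirely from $p_C$, since $Q$ has no dependence on $S$.

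For items~1, 2, and 3, I would compute $Q(1)$, $Q'(1)=\sum_{k}k(k+1)(2n-k-2)$, and $Q''(1)=\sum_{k}k(k-1)(k+1)(2n-k-2)$ by expanding the products and applying the standard closed-form expressions for $\sum k$, $\sum k^2$, $\sum k^3$, $\sum k^4$ over the range $0\le k\le n-3$. After simplification these sums collapse to the binomial coefficients
\[
Q(1)=4\tbinom{n}{3},\qquad Q'(1)=10\tbinom{n}{4},\qquad Q''(1)=36\tbinom{n}{5},
\]
so that subtracting $p_C(1)=\binom{n}{3}$, $p_C'(1)=\binom{n}{4}+\overline{cr}(S)$, and using $p_E''(1)=Q''(1)-p_C''(1)$ together with $p_C''(1)=\binom{n}{5}+(n-4)\overline{cr}(S)$ gives the stated formulas $3\binom{n}{3}$, $9\binom{n}{4}-\overline{cr}(S)$, and $35\binom{n}{5}-(n-4)\overline{cr}(S)$, respectively.

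For item~4, the task is to compute $Q(-1)=\sum_{k=0}^{n-3}(-1)^k(k+1)(2n-k-2)$ for $n$ odd. Expanding the integrand as $-k^2+(2n-3)k+(2n-2)$ reduces the problem to the three alternating sums $\sum(-1)^k$, $\sum(-1)^k k$, and $\sum(-1)^k k^2$ over $0\le k\le n-3$. Since $n-3$ is even, these telescope via pairing consecutive terms to give $1$, $(n-3)/2$, and $(n-3)(n-2)/2$, respectively. Collecting terms yields $Q(-1)=(n^2-1)/2$, and subtracting $p_C(-1)=(n-1)/2$ from Proposition~\ref{prop:circleeq} produces $p_E(-1)=n(n-1)/2$.

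The only real obstacle is bookkeeping in the polynomial/alternating sums; there is no conceptual difficulty, since the decomposition $p_E=Q-p_C$ isolates all the dependence on the geometry of $S$ inside $p_C$, to which Proposition~\ref{prop:circleeq} already applies.
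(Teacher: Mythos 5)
Your proposal is correct and follows essentially the same route as the paper: both use Equation~(\ref{circandE}) to write $E_{\leq k}=(k+1)(2n-k-2)-c_k$, reducing everything to Proposition~\ref{prop:circleeq} plus elementary evaluations of the geometry-independent sums (the paper records $\sum_k(k+1)(2n-k-2)=4\binom{n}{3}$ and $\sum_k(-1)^k(k+1)(2n-k-2)=\frac{n^2-1}{2}$, which are exactly your $Q(1)$ and $Q(-1)$). Your computed values $Q'(1)=10\binom{n}{4}$ and $Q''(1)=36\binom{n}{5}$ check out, so your write-up is simply a more explicit version of the paper's argument for items 2 and 3.
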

\begin{proof}  
   The first equality follows from Equation~(\ref{circandE})
   by summing over all $k$, and Proposition~\ref{prop:circleeq}, part~\ref{pc1}. Note that 
   $\sum_{k=0}^{n-3} \left((k+1)(2n-k-2)\right)=4{{n}\choose{3}}$.
   \[p_E(1)=\sum_{k=0}^{n-3} E_{\leq k}=\sum_{k=0}^{n-3} \left((k+1)(2n-k-2)-c_k\right)=4{{n}\choose{3}}-p_C(1)=3{{n}\choose{3}}\] 
   
   The second and third equalities follow from Equations~(\ref{eq:main}) and~(\ref{circandE}).
   
   The fourth equality follows from Equation~(\ref{circandE}) and Proposition~\ref{prop:circleeq}, part~\ref{pc-1}. Note that for $n$ odd: 
   
   $\sum_{k=0}^{n-3}(-1)^k \left((k+1)(2n-k-2)\right)=\frac{n^2-1}{2}$.
   \[p_E(-1)=\sum_{k=0}^{n-3}(-1)^k E_{\leq k}=\sum_{k=0}^{n-3}(-1)^k \left((k+1)(2n-k-2)\right)-p_{C}(-1)=\frac{n(n-1)}{2}\] 
\end{proof}

From the following result of Aziz and Mohammad, see Lemma 1 in~\cite{A80}, we get an intriguing family of formulas for the rectilinear crossing number in terms of the coefficients of the three studied polynomials, and the roots of a non-zero complex number $a\neq -1$. 

\begin{theorem}[Aziz and Mohammad, 1980]
\label{thm:Aziz}
    If $P(z)$ is a polynomial of degree $n$ and $z_1,\ldots, z_n$ are the zeros of $z^n+a$, where $a\neq -1$ is any non-zero complex number, then for any complex number $t$,
    \[tP'(t)=\frac{n}{1+a}P(t)+\frac{1+a}{n a}\sum_{k=1}^{n} P(t z_k) \frac{z_k}{(z_k-1)^2}.\]
\end{theorem}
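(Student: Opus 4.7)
The plan is to apply Lagrange interpolation at the $n+1$ nodes $1, z_1, \ldots, z_n$ to the auxiliary polynomial $F(z) := P(tz)$, which has degree at most $n$ in $z$ and satisfies $F'(1) = tP'(t)$, exactly the left-hand side of the identity. The hypothesis $a \neq -1$ is precisely what ensures that $Q(1) = 1 + a \neq 0$, where $Q(z) := z^n + a$, so that $1$ is distinct from every root $z_k$ and we really do have $n+1$ distinct interpolation nodes. Since a polynomial of degree $\le n$ is reproduced exactly by Lagrange interpolation at $n+1$ distinct nodes, no remainder term survives.

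Writing $F(z) = P(t) L_0(z) + \sum_{j=1}^n P(tz_j) L_j(z)$ with $L_0, L_1, \ldots, L_n$ the associated Lagrange basis (with $L_0$ the basis element for the node $1$), differentiating and setting $z=1$ reduces the theorem to computing the two numbers $L_0'(1)$ and $L_j'(1)$. For $L_0$, the numerator of $L_0(z)$ is the monic polynomial vanishing at $z_1, \ldots, z_n$, namely $Q(z)$, so $L_0(z) = Q(z)/(1+a)$ and $L_0'(1) = n/(1+a)$. This matches the coefficient of $P(t)$ in the target formula exactly.

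For $j \geq 1$ I would write $L_j(z) = \frac{(z-1)\,Q(z)}{(z_j - 1)(z - z_j)\,Q'(z_j)}$, and observe that since $L_j(1) = 0$, the derivative $L_j'(1)$ equals $g(1)$, where $g(z) := L_j(z)/(z-1)$ is analytic near $z=1$. Direct substitution gives $L_j'(1) = \frac{1+a}{(z_j - 1)(1 - z_j)\,n z_j^{n-1}}$, and two routine simplifications — the sign identity $(z_j - 1)(1 - z_j) = -(z_j - 1)^2$ together with $z_j^{n-1} = -a/z_j$ (from $z_j^n = -a$, valid because $a \neq 0$ forces $z_j \neq 0$) — rewrite this as $\frac{(1+a)\,z_j}{na\,(z_j - 1)^2}$. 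Assembling these pieces in the differentiated interpolation identity yields the stated formula.

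The main obstacle is essentially bookkeeping: keeping the signs straight in $(z_j - 1)(1 - z_j)$, and remembering that $a \neq 0$ is the hypothesis needed to invoke $z_j^{n-1} = -a/z_j$, while $a \neq -1$ is the hypothesis needed for the nodes to be distinct. Both appear in the statement of the theorem, and no deeper input is required beyond the Lagrange interpolation formula and the explicit product expression $Q'(z_j) = n z_j^{n-1}$ at a simple root.
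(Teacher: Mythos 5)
Your proof is correct, but note that the paper itself offers no proof to compare against: Theorem~\ref{thm:Aziz} is quoted verbatim as Lemma~1 of Aziz and Mohammad~\cite{A80} and used as a black box, so your argument is a self-contained reconstruction rather than a variant of anything in the text. The reconstruction checks out. Interpolating $F(z)=P(tz)$ (degree $\le n$, so the interpolation is exact with no remainder) at the $n+1$ nodes $1,z_1,\dots,z_n$ is legitimate because $a\neq -1$ keeps $1$ off the root set and $a\neq 0$ makes the roots of $z^n+a$ simple and distinct; the node polynomial for the $z_k$'s is exactly $Q(z)=z^n+a$, giving $L_0(z)=Q(z)/(1+a)$ and $L_0'(1)=n/(1+a)$ as you claim. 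For $j\ge 1$ the factorization $L_j(z)=(z-1)g(z)$ with $g(z)=\frac{Q(z)}{(z_j-1)(z-z_j)Q'(z_j)}$ a polynomial gives $L_j'(1)=g(1)$, and the two simplifications $(z_j-1)(1-z_j)=-(z_j-1)^2$ and $z_j^{n-1}=-a/z_j$ (using $Q'(z_j)=nz_j^{n-1}$) yield $L_j'(1)=\frac{(1+a)z_j}{na(z_j-1)^2}$, which is precisely the weight in the stated sum. A reassuring consistency check, which your setup delivers for free, is that $\sum_i L_i(z)\equiv 1$ forces $L_0'(1)+\sum_j L_j'(1)=0$, i.e.\ $\sum_k \frac{z_k}{(z_k-1)^2}=-\frac{n^2a}{(1+a)^2}$, so the identity degenerates correctly at $t=0$. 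No gaps.
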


\begin{proposition}\label{prop:Aziz}
    The coefficients of the polynomials $p_V(z)$, $p_C(z)$ and $p_E(z)$ satisfy
    \begin{equation} \label{eq:cr-1}
        \emph{\bf 1) }
        \overline{cr}(S)= \sum_{j=0}^{n-3}c_j \left( \frac{4}{3(n-3)}\sum_{k=1}^{n-3} \frac{z_k^{j+1}}{(z_k-1)^2} \right), 
    \end{equation}
    where the $z_k$ are the $(n-3)$-th roots of~$-3$.
    \begin{equation}\label{eq:cr-2}
        \emph{\bf 2) }
        \overline{cr}(S)=\sum_{j=1}^{n-1}v_j \left( \frac{2}{3(n-1)}\sum_{k=1}^{n-1} \frac{z_k^{j}}{(z_k-1)^2} \right),
    \end{equation}
    where the $z_k$ are the $(n-1)$-th roots of~$-3$.

    \begin{equation}\label{eq:cr-3}
        \emph{\bf 3) }
        \overline{cr}(S)=\sum_{j=0}^{n-3}E_{\leq j} \left( \frac{-4}{n-3}\sum_{k=1}^{n-3} \frac{z_k^{j+1}}{(z_k-1)^2} \right),
    \end{equation}
    where the $z_k$ are now the $(n-3)$-th roots of $-\frac{1}{3}$.
\end{proposition}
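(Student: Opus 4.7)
The plan is to apply Theorem~\ref{thm:Aziz} at $t=1$ to each of the three polynomials, with the constant $a$ chosen so that the $\frac{N}{1+a}P(1)$ term on the right cancels exactly the ``non-crossing'' part of $tP'(t)=P'(1)$ on the left. With this cancellation, the only surviving contribution on the right is the sum over roots of $z^N+a$, while on the left the only surviving contribution is $\overline{cr}(S)$ (up to a sign and an overall factor). Expanding $P(z_k)$ as $\sum_j (\text{coeff})\, z_k^j$ and swapping the order of summation produces the stated formulas.

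Concretely, for $P=p_C$ (and $N=n-3$), Proposition~\ref{prop:circleeq} gives $P(1)=\binom{n}{3}$ and $P'(1)=\binom{n}{4}+\overline{cr}(S)$; a short check shows $\binom{n}{4}=\tfrac{n-3}{4}\binom{n}{3}$, so choosing $a$ with $\tfrac{n-3}{1+a}=\tfrac{n-3}{4}$, i.e.\ $a=3$, makes the non-crossing parts cancel and leaves $\overline{cr}(S)=\tfrac{4}{3(n-3)}\sum_k p_C(z_k)\tfrac{z_k}{(z_k-1)^2}$, which after expanding $p_C(z_k)=\sum_{j}c_j z_k^j$ is exactly~\eqref{eq:cr-1}. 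For $P=p_E$ (still $N=n-3$), the analogous identity $9\binom{n}{4}=\tfrac{3(n-3)}{4}\cdot 3\binom{n}{3}$ forces $a=1/3$, and Proposition~\ref{prop:edgepoly} yields the extra sign $-\overline{cr}(S)$ on the left, producing~\eqref{eq:cr-3}.

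The case $P=p_V$ requires one small observation: $p_V$ has degree $n-2$, yet the statement uses the $n-1$ roots of $z^{n-1}+3$. The Aziz--Mohammad identity is linear in $P$, so it holds for any polynomial of degree at most $N$ (equivalently, with vanishing top coefficient), hence it applies to $p_V$ with $N=n-1$. The arithmetic identity to verify here is $\binom{n}{3}+2\binom{n}{4}=\tfrac{n-1}{2}\binom{n}{3}$, which again matches $\tfrac{N}{1+a}\,p_V(1)=\tfrac{n-1}{4}\cdot 2\binom{n}{3}$ when $a=3$; a factor of $2$ then survives on the left and yields the $\tfrac{2}{3(n-1)}$ prefactor of~\eqref{eq:cr-2}.

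None of the three steps is genuinely difficult once one has guessed the right $a$, so the only real work is the ``magic'' choice of $a$ in each case; this is not really a guess but is forced by the requirement that the purely combinatorial (non-$\overline{cr}(S)$) parts of $P(1)$ and $P'(1)$ must agree up to the factor $\tfrac{N}{1+a}$ supplied by the theorem. The rest reduces to the three identities $\binom{n}{4}=\tfrac{n-3}{4}\binom{n}{3}$, $\binom{n}{3}+2\binom{n}{4}=\tfrac{n-1}{2}\binom{n}{3}$, and $9\binom{n}{4}=\tfrac{3(n-3)}{4}\cdot 3\binom{n}{3}$, each of which is a one-line manipulation of binomial coefficients, followed by interchanging the finite sums over $j$ and $k$.
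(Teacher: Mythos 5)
Your proposal is correct and follows essentially the same route as the paper: apply Theorem~\ref{thm:Aziz} at $t=1$ with $a=3$ for $p_C$ and $p_V$ and $a=\tfrac{1}{3}$ for $p_E$, cancel the combinatorial parts via the binomial identities, and swap the finite sums. Your extra remark that the Aziz--Mohammad identity extends by linearity to polynomials of degree at most $N$ (needed because $p_V$ has degree $n-2$ while the theorem is invoked with $N=n-1$) is a valid and welcome justification of a point the paper leaves implicit.
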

\begin{proof}
    Using Theorem~\ref{thm:Aziz} for $t=1$ and for the circle polynomial $p_C(z)$, we get for any non-zero complex number $a\neq -1$,
    \begin{equation}
        {{n}\choose{4}}+\overline{cr}(S) = \frac{n-3}{1+a}{{n}\choose{3}}+ \frac{1+a}{(n-3) a}\sum_{k=1}^{n-3} \sum_{j=0}^{n-3}c_j  \frac{z_k^{j+1}}{(z_k-1)^2}
    \end{equation}
    
    \noindent Taking $a=3$ we get 
    \begin{equation}
        \overline{cr}(S)=\frac{4}{3(n-3)}\sum_{k=1}^{n-3} \sum_{j=0}^{n-3}c_j  \frac{z_k^{j+1}}{(z_k-1)^2}= \sum_{j=0}^{n-3}c_j \left( \frac{4}{3(n-3)}\sum_{k=1}^{n-3} \frac{z_k^{j+1}}{(z_k-1)^2} \right),
    \end{equation}
    where the $z_k$ are the $(n-3)$-th roots of $-3$.\\
    
    Using Theorem~\ref{thm:Aziz} for $t=1$ and for the Voronoi polynomial $p_V(z)$, we get for any non-zero complex number $a\neq -1$,
    \begin{equation}
        {{n}\choose{3}}+2{{n}\choose{4}}+2\overline{cr}(S) = \frac{n-1}{1+a}2{{n}\choose{3}}+ \frac{1+a}{(n-1) a}\sum_{k=1}^{n-1} \sum_{j=1}^{n-1}v_j  \frac{z_k^{j}}{(z_k-1)^2}
    \end{equation}
    
    \noindent And again, for $a=3$  we get
    \begin{equation}
        \overline{cr}(S)=\frac{2}{3(n-1)}\sum_{k=1}^{n-1} \sum_{j=1}^{n-1}v_j  \frac{z_k^{j}}{(z_k-1)^2}=\sum_{j=1}^{n-1}v_j \left( \frac{2}{3(n-1)}\sum_{k=1}^{n-1} \frac{z_k^{j}}{(z_k-1)^2} \right),
    \end{equation}
    where the $z_k$ are the $(n-1)$-th roots of $-3$.\\
    
    Finally, using Theorem~\ref{thm:Aziz} for $t=1$ and for the $E_{\leq k}$ polynomial $p_E(z)$, we get for any non-zero complex number $a\neq -1$,
    \begin{equation}
        9{{n}\choose{4}}-\overline{cr}(S) = \frac{3(n-3)}{1+a}{{n}\choose{3}}+ \frac{1+a}{(n-3) a}\sum_{k=1}^{n-3} \sum_{j=0}^{n-3} E_{\leq j} \frac{z_k^{j+1}}{(z_k-1)^2}
    \end{equation}
    
    \noindent and taking now $a=\frac{1}{3}$ we get
    \begin{equation}
        \overline{cr}(S)=-\frac{4}{n-3}\sum_{k=1}^{n-3} \sum_{j=0}^{n-3}E_{\leq j}  \frac{z_k^{j+1}}{(z_k-1)^2}=\sum_{j=0}^{n-3}E_{\leq j} \left( \frac{-4}{n-3}\sum_{k=1}^{n-3} \frac{z_k^{j+1}}{(z_k-1)^2} \right),
    \end{equation}
    where the $z_k$ are now the $(n-3)$-th roots of $-\frac{1}{3}$.
\end{proof}

\section{On the roots of Voronoi, circle and $E_{\leq_k}$ polynomials}\label{sec:rootsOfPolynomials}

In this section, we present properties of the roots of our polynomials.
Note that, by Proposition~\ref{prop:equalpoly}, the Voronoi polynomial $p_V(z)$ has the same roots as the circle polynomial $p_C(z)$, plus the additional root $z=-1$.

A direct relation between roots of polynomials and the rectilinear crossing number can be derived from the well-known relation 
\begin{equation}\label{eq:P'(z)/P(z)}
    \frac{P'(z)}{P(z)}=\sum_{i=1}^{n} \frac{1}{z-a_i},
\end{equation}
where $P(z)$ is a polynomial of degree $n$ with roots $a_1,\ldots, a_n$, and $z$ is any complex number such that $P(z)\neq 0.$\\

When we consider the circle polynomial $p_C(z)$ and $z=1$, using Proposition~\ref{prop:circleeq} we get
\begin{equation}\label{eqn:circleroots}
    \frac{ {{n}\choose{4}} + \overline{cr}(S)  }{ {{n}\choose{3}} } = \sum_{i=1}^{n-3}\frac{1}{1-a_i},
\end{equation}
where the $a_i$ are the roots of $p_C(z)=\sum_{k=0}^{n-3} c_k z^{k}.$ \\

Consider the reciprocal polynomial $p^*_C(z)=\sum_{k=0}^{n-3} c_k z^{n-k-3}$ of $p_C(z)$.
It is well known that the roots of a reciprocal polynomial  $p^*_C(z)$ are $1/a_i$ where $a_i$ is a root of $p_C(z).$
\begin{proposition}
    \begin{equation}\label{eq:circlereverse}
        \sum_{k=0}^{n-3} (n-k-3)c_k = 3{{n}\choose{4}} - \overline{cr}(S)
    \end{equation}
\end{proposition}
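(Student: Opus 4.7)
The plan is to read the left-hand side as a simple linear combination of two sums we already know. Writing $(n-k-3)c_k = (n-3)c_k - k\,c_k$ and summing over $k$ from $0$ to $n-3$ gives
\begin{equation*}
\sum_{k=0}^{n-3}(n-k-3)c_k \;=\; (n-3)\sum_{k=0}^{n-3} c_k \;-\; \sum_{k=0}^{n-3} k\,c_k.
\end{equation*}
The first sum is $p_C(1)=\binom{n}{3}$ by Proposition~\ref{prop:circleeq}, part~\ref{pc1}, while the second is $\binom{n}{4}+\overline{cr}(S)$ by Equation~(\ref{eq:main}).

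Substituting these values, the right-hand side becomes $(n-3)\binom{n}{3} - \binom{n}{4} - \overline{cr}(S)$. The final step is the elementary identity $(n-3)\binom{n}{3}=4\binom{n}{4}$, which follows directly from expanding both binomial coefficients. Combining everything yields $4\binom{n}{4}-\binom{n}{4}-\overline{cr}(S)=3\binom{n}{4}-\overline{cr}(S)$, as claimed.

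Conceptually, the computation is exactly the evaluation of $(p^*_C)'(1)$, since $(p^*_C)'(1) = (n-3)\,p_C(1) - p'_C(1)$ for any polynomial of degree $n-3$; so the proposition is simply the counterpart of Proposition~\ref{prop:circleeq}, part~2, for the reciprocal polynomial, and it will later feed into a formula analogous to~(\ref{eqn:circleroots}) written in terms of the roots $1/a_i$ of $p^*_C$. There is no real obstacle: everything reduces to the two known identities cited above plus the binomial identity $(n-3)\binom{n}{3}=4\binom{n}{4}$.
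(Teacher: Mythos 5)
Your proof is correct, but it takes a different route from the paper's. You compute the left-hand side directly by splitting $(n-k-3)c_k=(n-3)c_k-k\,c_k$ and plugging in the two known evaluations $\sum_k c_k=\binom{n}{3}$ and $\sum_k k\,c_k=\binom{n}{4}+\overline{cr}(S)$, finishing with the identity $(n-3)\binom{n}{3}=4\binom{n}{4}$. The paper instead argues through the roots: it pairs each root $a_i$ of $p_C$ with the root $1/a_i$ of the reciprocal polynomial $p^*_C$, uses $\frac{1}{1-a_i}+\frac{1}{1-1/a_i}=1$ together with the logarithmic-derivative formula $P'(z)/P(z)=\sum_i (z-a_i)^{-1}$ to get $\frac{p'_C(1)}{p_C(1)}+\frac{p^{*\prime}_C(1)}{p^*_C(1)}=n-3$, and then substitutes the same evaluations. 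The two arguments are equivalent in content (as you note, your computation is exactly $(p^*_C)'(1)=(n-3)p_C(1)-p'_C(1)$), but yours is more elementary and slightly more robust: it is a pure coefficient identity, with no need to invoke the roots or to check implicitly that $1$ and $0$ are not roots of $p_C$ (which the paper's pairing identity tacitly requires, though this is harmless here since all coefficients of $p_C$ are positive). The paper's root-based phrasing has the advantage of fitting into the surrounding narrative of Section~\ref{sec:rootsOfPolynomials}, where the same pairing trick is reused immediately afterwards in Proposition~\ref{prop:roots}.
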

\begin{proof}
    We use Equation~(\ref{eq:P'(z)/P(z)}).
    Observe that for a root $a_i$ of $p_C(z)$ and a root $1/a_i$ of $p^*_C(z)$ we have $\frac{1}{1-a_i} + \frac{1}{1-1/a_i}=1.$ Then,
    \[\frac{p'_C(1)}{p_C(1)} + \frac{p^{*'}_C(1)}{p^*_C(1)} = n-3.\]
    Equation~(\ref{eq:circlereverse}) follows by substituting  $p'_C(1)={{n}\choose{4}} +\overline{cr}(S)$, and $p_C(1)=p^*_C(1)={{n}\choose{3}}.$
\end{proof}

The following equation is obtained in a similar fashion:
\begin{proposition}\label{prop:roots}
    \begin{equation}
        \frac{ -2{{n}\choose{4}}+2 \overline{cr}(S)   }{ {{n}\choose{3}}  } = \sum_{i=1}^{n-3}\frac{1+a_i}{1-a_i},
    \end{equation}
    where the $a_i$ are the roots of $p_C(z)=\sum_{k=0}^{n-3}  c_k z^{k}.$\\
\end{proposition}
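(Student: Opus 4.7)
The plan is to mimic the strategy of the previous proposition (on the reciprocal polynomial), relating roots of $p_C(z)$ and of its reciprocal $p_C^*(z)$ through Equation~(\ref{eq:P'(z)/P(z)}) at $z=1$. The key algebraic observation is that for each nonzero root $a_i$ of $p_C(z)$ the value $1/a_i$ is a root of $p_C^*(z)$, and a direct manipulation gives
\[
\frac{1}{1-1/a_i}=-\frac{a_i}{1-a_i},\qquad\text{so that}\qquad
\frac{1+a_i}{1-a_i}=\frac{1}{1-a_i}-\frac{1}{1-1/a_i}.
\]

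Summing this identity over $i=1,\dots,n-3$ turns the right-hand side into $\frac{p'_C(1)}{p_C(1)}-\frac{p_C^{*\prime}(1)}{p_C^*(1)}$ by Equation~(\ref{eq:P'(z)/P(z)}). Both denominators equal $\binom{n}{3}$ by Proposition~\ref{prop:circleeq}~(\ref{pc1}). For the numerators, Proposition~\ref{prop:circleeq} gives $p'_C(1)=\binom{n}{4}+\overline{cr}(S)$, while differentiating $p^*_C(z)=\sum_k c_k z^{n-k-3}$ termwise and evaluating at $z=1$ yields $p_C^{*\prime}(1)=\sum_{k}(n-k-3)c_k=3\binom{n}{4}-\overline{cr}(S)$ by Equation~(\ref{eq:circlereverse}). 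Subtracting the two ratios gives exactly $\bigl(-2\binom{n}{4}+2\,\overline{cr}(S)\bigr)/\binom{n}{3}$, as desired.

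An equivalent, slightly shorter route is to write $\frac{1+a_i}{1-a_i}=\frac{2}{1-a_i}-1$, plug in Equation~(\ref{eqn:circleroots}) to evaluate $\sum_i\frac{1}{1-a_i}$, and absorb the constant term $-(n-3)$ using the elementary binomial identity $(n-3)\binom{n}{3}=4\binom{n}{4}$. I do not expect any serious obstacle here; the only things to watch are the sign when converting $\frac{1}{1-1/a_i}$ back to a function of $a_i$, and the bookkeeping of the binomial coefficients.
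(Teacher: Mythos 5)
Your proposal is correct and its main route is exactly the paper's argument: apply Equation~(\ref{eq:P'(z)/P(z)}) at $z=1$ to $p_C$ and its reciprocal $p^*_C$, use the identity $\frac{1}{1-a_i}-\frac{1}{1-1/a_i}=\frac{1+a_i}{1-a_i}$, and substitute $p'_C(1)$, $p^{*\prime}_C(1)$ and $p_C(1)=p^*_C(1)=\binom{n}{3}$. Your alternative via $\frac{1+a_i}{1-a_i}=\frac{2}{1-a_i}-1$ and Equation~(\ref{eqn:circleroots}) is also valid, but the substance matches the paper's proof.
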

\begin{proof}
    \[\frac{p'_C(1)}{p_C(1)} - \frac{p^{*'}_C(1)}{p^*_C(1)} =  \sum_{i=1}^{n-3} \frac{1}{1-a_i} - \sum_{i=1}^{n-3} \frac{1}{1-1/a_i} = \sum_{i=1}^{n-3} \frac{1+a_i}{1-a_i}.\]
    The left side of this equation simplifies to $\frac{ -2{{n}\choose{4}}+2 \overline{cr}(S)   }{ {{n}\choose{3}}}  $.
\end{proof}

Note  that Proposition~\ref{prop:roots} also works for the roots of the Voronoi polynomial $p_V(z)$, since the Voronoi polynomial has the same roots as the circle polynomial, plus the additional root $z=-1$ for which the corresponding term in the sum is zero.

Of particular interest is the polynomial $p_V(z)=\sum_{k=1}^{n-1}  v_k z^{k-1}$ for a set of $n$ points in convex position. By Equation (\ref{eqn:numverticespv}), $v_k =(2k - 1)  n - 2  k^2$. By Equation~(\ref{eq:palindromic}), $p_V(z)$ is a palindromic polynomial, so it has roots $a_i$ and $1/a_i$. It follows that for sets $S$ of $n$ points in convex position
\begin{equation}
    \sum_{i=1}^{n-2}\frac{1}{1-a_i}=\frac{n-2}{2},
\end{equation}
where the $a_i$ are the roots of $p_V(z)=\sum_{k=1}^{n-1}  v_k z^{k-1}$.
We also have that for $S$ in convex position, from Proposition~\ref{prop:roots},
\begin{equation}
    \sum_{i=1}^{n-2}\frac{1+a_i}{1-a_i}=0.
\end{equation}

For our next result we use the following theorem due to~\cite{M69}, also see the book by ~\cite{RS02}, Corollary 14.4.2.

\begin{theorem}[Rahman and Schmeisser, 2002]\label{zerosDisk}
        Let $f$ be a polynomial of degree $n$ having all its zeros in the closed disk $|z|\leq k$, where $k\leq 1$. Then
        \[\max_{|z|=1} |f'(z)| \geq  \frac{n}{1+k} \max_{|z|=1} |f(z)|.\]
\end{theorem}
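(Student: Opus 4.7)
The plan is to establish the inequality by the classical logarithmic-derivative technique for estimates on the unit circle. Writing $f(z) = c\prod_{i=1}^{n}(z-z_i)$ with $|z_i|\le k\le 1$, my goal is to bound the real part of
\[
\frac{z\,f'(z)}{f(z)} \;=\; \sum_{i=1}^{n}\frac{z}{z-z_i}
\]
from below on $|z|=1$, and then transfer that lower bound to the modulus $|f'(z)/f(z)|$.

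The core of the argument is the pointwise inequality
\[
\operatorname{Re}\frac{z}{z-a} \;\ge\; \frac{1}{1+k}
\qquad\text{for every $z$ with $|z|=1$ and every $a$ with $|a|\le k\le 1$.}
\]
To prove it, I would set $z\bar a = r e^{-i\theta}$ with $r=|a|\le k$, so that $|z-a|^2 = 1 + r^2 - 2r\cos\theta$ and $\operatorname{Re}(z\bar a) = r\cos\theta$. After clearing the positive denominator, the claim reduces to
\[
k - r^{2} + r(1-k)\cos\theta \;\ge\; 0.
\]
Since the coefficient $r(1-k)$ is non-negative, the minimum over $\theta$ is attained at $\cos\theta=-1$, where the left-hand side equals $(k-r)(1+r)$, which is non-negative because $0\le r\le k\le 1$. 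This is the only genuine computation in the argument.

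Summing the pointwise bound over the $n$ zeros yields $\operatorname{Re}\frac{z f'(z)}{f(z)} \ge \frac{n}{1+k}$ at every $z$ on $|z|=1$ with $f(z)\ne 0$, and in particular
\[
|f'(z)| \;\ge\; \left|\frac{z f'(z)}{f(z)}\right|\cdot|f(z)| \;\ge\; \frac{n}{1+k}\,|f(z)|.
\]
This pointwise comparison also holds trivially when $f(z)=0$, so it is valid on all of $|z|=1$. Picking $z_0$ with $|z_0|=1$ where $|f|$ attains its maximum on the circle then gives $\max_{|z|=1}|f'(z)| \ge |f'(z_0)| \ge \frac{n}{1+k}\,|f(z_0)| = \frac{n}{1+k}\max_{|z|=1}|f(z)|$, which is the desired conclusion.

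I expect the pointwise lower bound for $\operatorname{Re}\frac{z}{z-a}$ to be the only real obstacle; once that is in hand, the rest is routine manipulation of the logarithmic derivative together with choosing $z_0$ to be a maximum point of $|f|$. A minor subtlety is the possibility that $f$ vanishes on $|z|=1$ when $k=1$, but at any such point the inequality reduces to $|f'(z)|\ge 0$, and the global statement follows by taking the supremum over the remaining points (or by an elementary continuity argument).
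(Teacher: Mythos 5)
The paper does not prove this statement: it is quoted from the literature (Malik~\cite{M69}; Rahman and Schmeisser~\cite{RS02}, Corollary 14.4.2), so there is no in-paper proof to compare against. Your argument is correct and complete, and it is in fact the standard proof of this Tur\'an--Malik type inequality. The key computation checks out: for $|z|=1$ one has $\operatorname{Re}\frac{z}{z-a}=\frac{1-r\cos\theta}{1+r^2-2r\cos\theta}$ with $z\bar a=re^{-i\theta}$, and clearing the positive denominator in $\operatorname{Re}\frac{z}{z-a}\ge\frac{1}{1+k}$ does reduce to $k-r^2+r(1-k)\cos\theta\ge 0$, whose minimum $(k-r)(1+r)$ over $\theta$ is non-negative since $r\le k$. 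Summing over the zeros, using $|w|\ge\operatorname{Re}w$ to pass from the real part of $zf'(z)/f(z)$ to its modulus (note your first ``$\ge$'' in the final chain is actually an equality, as $|z|=1$), and evaluating at a maximizer of $|f|$ on the circle completes the proof; the degenerate case of a zero on $|z|=1$ when $k=1$ is handled correctly. The only cosmetic remark is that the statement as quoted implicitly assumes $f\not\equiv 0$ and $k\ge 0$, which your factorization already presupposes.
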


\begin{theorem}\label{thm:convex}
    Let $S$ be a set of $n$ points in general position in the plane. Then $S$ is in convex position if and only if all the roots of the Voronoi polynomial of $S$, $p_V(z)=\sum_{k=1}^{n-1}  v_k z^{k-1}$, lie on the unit circle $\{z:|z|=1\}$.
\end{theorem}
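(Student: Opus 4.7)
The plan is to prove the two directions separately: for ``all roots of $p_V$ on $|z|=1$ implies $S$ convex'', I would apply Malik's inequality (Theorem~\ref{zerosDisk}); for the converse, I would exhibit an explicit closed form for $p_C$ in the convex case and then count zeros on the unit circle directly. The converse is where the real work sits.

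For the easier direction, assume all zeros of $p_V$ lie on $|z|=1$; in particular, they lie in the closed unit disk, so Theorem~\ref{zerosDisk} with $k=1$ yields $\max_{|z|=1}|p_V'(z)|\geq \tfrac{n-2}{2}\max_{|z|=1}|p_V(z)|$ (using $\deg p_V=n-2$). Because $p_V$ has non-negative coefficients, both maxima are attained at $z=1$, so this specialises to $p_V'(1)\geq \tfrac{n-2}{2}p_V(1)$. Substituting the already-derived $p_V(1)=2\binom{n}{3}$ and $p_V'(1)=\binom{n}{3}+2\binom{n}{4}+2\overline{cr}(S)$ and simplifying yields $\overline{cr}(S)\geq \binom{n}{4}$. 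Since $\overline{cr}(S)\leq \binom{n}{4}$ always, equality must hold, which is equivalent to $S$ being in convex position.

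For the converse, use the convex-position formula $c_k=(k+1)(n-k-2)$ from~(\ref{eqn:numverticespv}) to obtain, by direct manipulation, the closed form
\[
(1-z)^3 p_C(z) = (n-2)(1-z^n) - nz(1-z^{n-2}) =: Q(z).
\]
Since $p_C(1)=\binom{n}{3}\neq 0$, $Q$ has a triple zero at $z=1$ and $n-3$ further zeros; it suffices to show that those further zeros lie on $|z|=1$, because then $p_C=Q/(1-z)^3$ has all its roots on $|z|=1$, and $p_V=(1+z)p_C$ only adds the unit-modulus root $z=-1$. On $z=e^{i\theta}$, factoring out $-2ie^{in\theta/2}$ rewrites $Q(e^{i\theta})=0$ as
\[
F(\theta) := (n-2)\sin(n\theta/2) - n\sin((n-2)\theta/2) = 0,
\]
so what remains is to count the zeros of $F$ in $[0,2\pi)$.

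The technical heart, and the main obstacle, is to show that $F$ has exactly $n$ zeros in $[0,2\pi)$ counted with multiplicity. I would compute $F'(\theta)=-n(n-2)\sin((n-1)\theta/2)\sin(\theta/2)$, whose only zeros in $(0,2\pi)$ are the equispaced critical points $\theta_k=2k\pi/(n-1)$ for $k=1,\ldots,n-2$, between which $F$ is strictly monotone. A direct evaluation gives $F(\theta_k)=2(-1)^k(n-1)\sin(k\pi/(n-1))$, so the signs at consecutive $\theta_k$ strictly alternate, producing $n-3$ simple zeros in $(\theta_1,\theta_{n-2})$. A Taylor expansion at $\theta=0$ shows $F$ has a triple zero there, and a short sign/monotonicity check rules out further zeros in the two side-intervals $(0,\theta_1)$ and $(\theta_{n-2},2\pi)$. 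This accounts for $3+(n-3)=n$ zeros; since $\deg Q=n$, all zeros of $Q$ lie on $|z|=1$, completing the proof.
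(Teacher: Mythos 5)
Your proof is correct, and the forward implication is essentially the paper's own argument: you apply Malik's inequality (Theorem~\ref{zerosDisk}) to $p_V$ where the paper applies it to $p_C$, but via $p_V=(1+z)p_C$ the two computations are equivalent and both land on $\overline{cr}(S)\geq\binom{n}{4}$, hence equality and convex position. The converse is where you genuinely diverge. The paper observes that in convex position $p_C$ is palindromic with coefficients satisfying $2c_k=c_{k-1}+c_{k+1}+2\geq c_{k-1}+c_{k+1}$ (Equation~(\ref{eqn:concave})) and then invokes a quotable criterion (\cite{CH20}, Theorem~2.5: palindromic with concave coefficient sequence implies all zeros unimodular). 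You instead derive the closed form $(1-z)^3p_C(z)=(n-2)(1-z^n)-nz(1-z^{n-2})$ from $c_k=(k+1)(n-k-2)$ --- the identity checks out, though the correct source for that coefficient formula is equality in Equations~(\ref{inequ1}) and~(\ref{inequ2}) rather than Equation~(\ref{eqn:numverticespv}), which concerns $v_k$ --- and then count zeros of $F(\theta)=(n-2)\sin(n\theta/2)-n\sin((n-2)\theta/2)$ directly. Your computations are right: $F'(\theta)=-n(n-2)\sin((n-1)\theta/2)\sin(\theta/2)$, the critical values $F(\theta_k)=2(-1)^k(n-1)\sin\left(k\pi/(n-1)\right)$ alternate in sign, the zero at $\theta=0$ has multiplicity exactly three (matching the factor $(1-z)^3$, since $p_C(1)=\binom{n}{3}\neq0$), and the two side-intervals contribute nothing, so all $n$ zeros of the degree-$n$ polynomial $Q$ are accounted for on $|z|=1$. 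Your route is longer but elementary and self-contained, and it yields strictly more information: it localizes the roots of $p_C$, one in each arc between consecutive $(n-1)$-th roots of unity away from $z=1$. The paper's route is a two-line reduction to a known theorem, and the concavity identity it isolates is reused elsewhere (e.g., in the proof of Theorem~\ref{thm:normal1}).
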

\begin{proof}
    In view of Proposition~\ref{prop:equalpoly}, we can consider the circle polynomial $p_C(z)$ instead of the Voronoi polynomial.
    
    If all the roots of $p_C(z)$  lie on the unit circle, in particular they are contained in the closed unit disk. Thus, we can apply Theorem~\ref{zerosDisk} to the circle polynomial. Since all its coefficients are positive, $\max_{|z|=1} |f'(z)| = f'(1)$ and $\max_{|z|=1} |f(z)| = f(1)$. 
    By Proposition~\ref{prop:circleeq}, we get
    \[{{n}\choose{4}}+\overline{cr}(S) \geq \frac{n-3}{1+k} {{n}\choose{3}}.\] Observing that $(n-3){{n}\choose{3}}=4{{n}\choose{4}}$ and $k\leq 1$, this simplifies to
    \[\overline{cr}(S) \geq {{n}\choose{4}}\frac{3-k}{1+k} \geq {{n}\choose{4}}.\]
    Then, since the inequality $\overline{cr}(S) \leq {{n}\choose{4}}$ always holds, we have $\overline{cr}(S) = {{n}\choose{4}}$, which implies that (is actually equivalent to) $S$ being in convex position.
    
    It remains to show that for a point set $S$ in convex position all its roots lie on the unit circle.
    For $S$ in convex position we have equality in Equations~(\ref{inequ1}) and~(\ref{inequ2}), which implies that $p_C(z)$ is a palindromic polynomial. 
    By Equation~(\ref{eqn:concave}), for $S$ in convex position the coefficients of $p_C(z)$ satisfy 
		$2c_k = c_{k-1} + c_{k+1} + 2$.
    For a palindromic polynomial (with say coefficients $c_k$) that satisfies $2c_k \geq c_{k-1} + c_{k+1}$ it is known that all its roots lie on the unit circle, see e.g.~\cite{CH20}, Theorem 2.5.
\end{proof}

In order to find a lower bound on the largest modulus of the roots of $p_C(z)$ with $S$ not in convex position, we use the following two theorems. The first one is due to~\cite{L1878}, Theorem 1, see also \cite{Nagy1933}, and ~\cite{RS02}, Theorem 3.2.1b.

\begin{theorem}[Rahman and Schmeisser, 2002]\label{thm:sepcircle}
    Let $f$ be a polynomial of degree $n \geq 2$. If $\zeta$, belonging to the complex plane, is neither a zero nor a critical point of $f$, then every circle $C$ that passes through $\zeta$ and $\zeta - n\frac{f(\zeta)}{f'(\zeta)}$ separates at least two zeros of $f$ unless the zeros all lie on $C$.
\end{theorem}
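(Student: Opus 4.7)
The plan is to reduce Laguerre's theorem to a one-line observation about the centroid of a finite planar point set via a M\"obius transformation. The motivation is the logarithmic derivative identity
\[
\frac{f'(\zeta)}{f(\zeta)} \;=\; \sum_{i=1}^{n} \frac{1}{\zeta-a_i},
\]
where $a_1,\dots,a_n$ are the zeros of $f$; the hypothesis that $\zeta$ is neither a zero nor a critical point makes both sides finite and non-zero. This identity naturally suggests the M\"obius transformation $T(z)=\tfrac{1}{\zeta-z}$, which sends $\zeta$ to $\infty$ and linearises the harmonic-type sum on the right-hand side.

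The main step I would carry out is to compute the images $w_i := T(a_i) = \tfrac{1}{\zeta-a_i}$ and check that their arithmetic mean equals $T\bigl(\zeta - n\,f(\zeta)/f'(\zeta)\bigr)$; by the identity above this is immediate, since $T\bigl(\zeta-n f(\zeta)/f'(\zeta)\bigr) = \tfrac{f'(\zeta)}{n f(\zeta)} = \tfrac{1}{n}\sum_i w_i$. In other words, $T$ sends the auxiliary point $\zeta - n\,f(\zeta)/f'(\zeta)$ to the centroid $\bar{w}$ of the $w_i$. Since $T$ maps circles through $\zeta$ to straight lines, any circle $C$ through $\zeta$ and through $\zeta - n\,f(\zeta)/f'(\zeta)$ is mapped to a line $L$ passing through the centroid $\bar{w}$ of the point set $\{w_1,\dots,w_n\}$.

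From here, the argument concludes by the following convex-geometry lemma: a line through the centroid of a finite planar point set either contains all the points or has at least one point strictly on each side. The contrapositive is the elementary fact that the centroid of a set contained in an open half-plane must also lie in that open half-plane. Translating back through $T$, which maps the two open sides of $L$ bijectively to the two components of the complement of $C$, yields exactly the stated dichotomy: either all zeros of $f$ lie on $C$, or $C$ separates at least two of them. The step I expect to require the most care is the bookkeeping of the M\"obius correspondence between ``sides of a circle through $\zeta$'' in the $z$-plane and ``sides of a line'' in the $w$-plane, since it is standard but easy to get wrong in the oriented version; all other ingredients are routine.
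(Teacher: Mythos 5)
The paper does not actually prove this statement: it is Laguerre's separation theorem, quoted verbatim from the literature (Laguerre 1878; Nagy 1933; Rahman and Schmeisser, Theorem 3.2.1b) and used as a black box in the proof of Theorem~\ref{theorem:VPRootModulus}, so there is no in-paper argument to compare yours against. Your proposal is the classical proof of Laguerre's theorem and it is correct. The identity $f'(\zeta)/f(\zeta)=\sum_{i}1/(\zeta-a_i)$, with the zeros listed according to multiplicity so that the sum has exactly $n$ terms, shows that $T(z)=1/(\zeta-z)$ sends the derived point $\zeta-n\,f(\zeta)/f'(\zeta)$ to the arithmetic mean of the images $w_i=T(a_i)$; the hypothesis that $\zeta$ is not a zero makes every $w_i$ finite, and the hypothesis that it is not a critical point makes the derived point finite. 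A circle through $\zeta$ and the derived point is a generalized circle through the pole of $T$, hence maps to a line through that centroid, and the centroid lemma (the signed distance from the centroid to a line is the average of the signed distances of the points, so a line through the centroid either contains all the points or has one strictly on each side) yields exactly the stated dichotomy. The side-bookkeeping you flag as delicate is in fact harmless here, because the conclusion is symmetric in the two sides: all you need is that the Möbius map carries the two complementary components of $C$ on the Riemann sphere bijectively onto the two open half-planes bounded by $L$, which is automatic for a homeomorphism of the sphere preserving generalized circles. The only detail worth stating explicitly is the multiplicity convention on the zeros just mentioned.
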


The second theorem is due to~\cite{Ob23}, also see~\cite{BE15} and~\cite{Ma66}, Chapter IX, 41, Exercise 5.
\begin{theorem}[Obrechkoff, 1923]
\label{thm:Obrechkoff}
    For every polynomial $P$ of degree $n$ with non-negative coefficients and every $\theta \in \left(0, \frac{\pi}{2}\right)$, the number of roots in the sector 
    $\{z \in \mathbb{C}\backslash\{0\} \ | \ \ |arg(z) | \leq \theta\}$ is at most $\frac{2\theta n}{\pi}$.
\end{theorem}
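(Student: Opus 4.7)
The plan is to apply the argument principle to a sector-shaped contour. Let $W_R = \{z \in \mathbb{C} : 0 < |z| < R,\ |\arg z| < \theta\}$ and let $\Gamma_R = \partial W_R$ be oriented counterclockwise, with $R$ large. After dividing out the largest power of $z$ dividing $P$ (which only removes zeros at the origin, not in the open sector), I may assume $a_0 > 0$. Non-negativity of the $a_k$ gives $P(r) > 0$ for every $r > 0$, so no zero of $P$ lies on the positive real axis, and for generic $R$ no zero lies on $\Gamma_R$.

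By the argument principle, the number $N$ of zeros of $P$ in $W_R$ equals $\frac{1}{2\pi}\Delta_{\Gamma_R}\arg P$. I would compute the three contributions to $\Delta\arg P$ separately. On the outer arc, $P(z)\sim a_n z^n$ uniformly as $R\to\infty$, giving contribution $2n\theta + o(1)$. Call $\Delta_1$ and $\Delta_3$ the contributions from the two radial segments. By the reality of the coefficients one has $P(\bar z)=\overline{P(z)}$, so the curves traced on the two rays are mirror images and $\Delta_1=\Delta_3$ (with the proper orientation of $\Gamma_R$). The target bound $N\leq 2n\theta/\pi$ reduces to $\Delta_1\leq n\theta + o(1)$.

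The heart of the proof, and the main obstacle, is bounding $\Delta_1$, the continuous argument change of $P(te^{-i\theta})$ as $t$ runs from $0$ to $R$. The endpoint of $\arg P$ is asymptotically $-n\theta \pmod{2\pi}$, so $\Delta_1 = -n\theta + 2\pi k_1$ for some integer $k_1\geq 0$, and the goal becomes $k_1\leq n\theta/\pi$. Here the non-negativity of the coefficients enters as a Descartes-type bound: the imaginary part of $P(te^{-i\theta})$ is the real polynomial $-\sum_k a_k \sin(k\theta)\,t^k$, whose coefficient sequence changes sign only when $k\theta$ crosses a multiple of $\pi$, that is, at most $\lfloor n\theta/\pi\rfloor$ times as $k$ ranges over $0,1,\ldots,n$. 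By Descartes' rule of signs, this imaginary part has at most $\lfloor n\theta/\pi\rfloor$ positive real roots, so the curve $t\mapsto P(te^{-i\theta})$ crosses the real axis at most $\lfloor n\theta/\pi\rfloor$ times for $t>0$. Each full extra winding of the curve around the origin forces two additional real-axis crossings, and the delicate step is converting this crossings bound into the bound $k_1\leq n\theta/\pi$ without losing an additive constant from the behavior at the two endpoints; this requires a careful case analysis of the position of the curve relative to the half-lines $\mathrm{arg} = k\pi$ near $t=0$ and $t=R$. Once this is carried out, combining with $\Delta_3=\Delta_1$ closes the proof.
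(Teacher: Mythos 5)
First, note that the paper offers no proof of this statement: it is quoted as a classical theorem of Obrechkoff, with pointers to \cite{Ob23}, \cite{BE15} and \cite{Ma66}, so there is no in-paper argument to compare yours against. Your strategy --- the argument principle on the boundary of the sector, conjugate symmetry to identify the two radial contributions, and Descartes' rule of signs applied to $\mathrm{Im}\,P(te^{-i\theta})$ --- is in fact the standard route to this theorem (essentially the argument of Bergweiler and Eremenko \cite{BE15}), and the steps you do carry out are sound: the outer arc contributes $2n\theta+o(1)$, the two rays contribute $2\Delta_1$, the sequence $(-a_k\sin(k\theta))_{k}$ has at most $\lfloor n\theta/\pi\rfloor$ sign changes because $k\theta$ increases monotonically through at most that many multiples of $\pi$, and the reduction of the theorem to $\Delta_1\le n\theta+o(1)$ is correct.

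The gap is that the step you yourself call ``the heart of the proof'' is announced rather than executed: the proposal ends with ``once this is carried out \ldots closes the proof,'' so as written it is a sketch, not a proof. The good news is that the conversion you fear requires ``a careful case analysis'' closes cleanly, with no endpoint trouble. Write $m=\lfloor n\theta/\pi\rfloor$ and let $\phi(t)$ be the continuous argument of $\gamma(t)=P(te^{-i\theta})$ normalized by $\phi(0)=0$ (possible since $a_0>0$). Your bookkeeping gives $N=2k_1$ with $\phi(R)=-n\theta+2\pi k_1$. If $k_1\ge m+1$, then since $n\theta<(m+1)\pi$ one gets $\phi(R)=-n\theta+2\pi k_1>(m+1)\pi$; as $\phi(0)=0$, the intermediate value theorem forces $\phi$ to attain each of the $m+1$ levels $\pi,2\pi,\dots,(m+1)\pi$ at some $t\in(0,R)$, necessarily at $m+1$ distinct positive values of $t$, and each such $t$ is a zero of $\mathrm{Im}\,\gamma$ --- one more than Descartes allows. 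Hence $k_1\le m\le n\theta/\pi$ and $N\le 2n\theta/\pi$. Two housekeeping points you should also add: the sector in the statement is closed, so zeros may lie on the rays $\arg z=\pm\theta$ (and $\mathrm{Im}\,\gamma$ may vanish identically, e.g.\ $P(z)=1+z^4$, $\theta=\pi/4$); run the argument for a slightly larger half-angle $\theta'$ avoiding these degeneracies and let $\theta'\downarrow\theta$. Also, $k_1\ge 0$ need not be asserted a priori; it follows from $N=2k_1\ge 0$.
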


\begin{theorem}\label{theorem:VPRootModulus}
    For every set $S$ of $n>3$ points in general position with rectilinear crossing number $\overline{cr}(S)=\alpha\cdot{{n}\choose{4}}$, the Voronoi polynomial $p_V(z)=\sum_{k=1}^{n-1} v_k z^{k-1}$ has a root of modulus at least $1+ \frac{(1-\alpha)\pi^2}{16(n-3)^2} +O\left( \frac{1}{n^4} \right)$. 

\end{theorem}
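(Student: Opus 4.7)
Since by Proposition~\ref{prop:equalpoly} the roots of $p_V$ are those of $p_C$ together with $-1$, and $-1$ already realises the claimed bound when $\alpha=1$ (the bound is then just $1$), it suffices to prove that $p_C(z)=\sum_{k=0}^{n-3}c_kz^k$ admits a root of the required modulus, and I will assume $\alpha<1$. The plan is to apply Laguerre's theorem (Theorem~\ref{thm:sepcircle}) with $f=p_C$ and $\zeta=1$, which is legitimate because $p_C(1)=\binom{n}{3}$ and $p_C'(1)=(1+\alpha)\binom{n}{4}$ are both nonzero by Proposition~\ref{prop:circleeq}. Using the identity $(n-3)\binom{n}{3}=4\binom{n}{4}$, the second Laguerre point comes out to
\[\zeta':=\zeta-(n-3)\frac{p_C(\zeta)}{p_C'(\zeta)}=1-\frac{4}{1+\alpha}=\frac{\alpha-3}{1+\alpha}\le -1.\]
Among the pencil of circles through $\zeta$ and $\zeta'$ I single out the smallest one, $C$, which has $[\zeta',1]$ as a diameter; its centre is $c_0=\frac{\alpha-1}{1+\alpha}$ and its radius is $r_0=\frac{2}{1+\alpha}$. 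Note $c_0+r_0=1$ and $|c_0|<r_0$, so the origin lies strictly inside the disk bounded by $C$.

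Next I would invoke Obrechkoff's theorem (Theorem~\ref{thm:Obrechkoff}): since $p_C$ has non-negative coefficients and degree $n-3$, for every $\theta<\theta^\star:=\frac{\pi}{2(n-3)}$ the estimate $\frac{2\theta(n-3)}{\pi}<1$ forces the open sector $|\arg z|<\theta$ to contain no root of $p_C$. Letting $\theta\uparrow\theta^\star$ gives that every root $a$ of $p_C$ satisfies $|\arg a|\ge\theta^\star$.

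The geometric heart of the argument is now as follows. Let $r(\phi)$ denote the modulus of the unique intersection of the ray $\arg z=\phi$ with $C$; this is well defined because $0$ lies inside $C$. The disk is symmetric about the real axis, so $r$ is even, and a short computation with the explicit formula below shows that $r$ is strictly increasing on $[0,\pi]$, from $r(0)=1$ (attained at $z=1$) to $r(\pi)=\tfrac{3-\alpha}{1+\alpha}$ (attained at $z=\zeta'$). Consequently, every point $z$ with $|\arg z|\ge\theta^\star$ lying on or outside $C$ satisfies $|z|\ge r(|\arg z|)\ge r(\theta^\star)$. Laguerre's theorem asserts that either $C$ separates at least two roots of $p_C$ (so, in particular, at least one root lies strictly outside $C$) or all roots lie on $C$; combined with the Obrechkoff angular restriction, in both cases some root of $p_C$ has modulus at least $r(\theta^\star)$.

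It only remains to estimate $r(\theta^\star)$ explicitly. Writing $z=re^{i\phi}$ on $C$ and squaring yields $r^2-2rc_0\cos\phi+c_0^2-r_0^2=0$, whose positive solution is $r(\phi)=c_0\cos\phi+\sqrt{r_0^2-c_0^2\sin^2\phi}$. Using $c_0+r_0=1$ together with $c_0/r_0=\tfrac{\alpha-1}{2}$, a Taylor expansion at $\phi=0$ gives
\[r(\phi)=1+\frac{1-\alpha}{4}\phi^2+O(\phi^4),\]
and substituting $\phi=\theta^\star=\frac{\pi}{2(n-3)}$ produces the claimed bound $1+\frac{(1-\alpha)\pi^2}{16(n-3)^2}+O(n^{-4})$. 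The two points I expect to require some care are the boundary situation in Obrechkoff, where a single root could sit exactly on the edge of the narrow sector (handled by the limit $\theta\uparrow\theta^\star$), and the exceptional Laguerre alternative ``all zeros lie on $C$'', which does no harm because the angular restriction forces each such root to have modulus at least $r(\theta^\star)$ as well.
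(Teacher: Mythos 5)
Your proposal is correct and follows essentially the same route as the paper: Laguerre's theorem applied to $p_C$ at $\zeta=1$ yielding the diameter circle through $1$ and $\frac{\alpha-3}{1+\alpha}$, Obrechkoff's theorem to exclude the sector $|\arg z|<\frac{\pi}{2(n-3)}$, and the same intersection/Taylor computation giving $1+\frac{1-\alpha}{4}\phi^2+O(\phi^4)$. Your extra care with the ``all zeros on $C$'' alternative, the monotonicity of $r(\phi)$, and the boundary of the Obrechkoff sector only tightens details the paper leaves implicit.
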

\begin{proof}
    First observe that if $S$ is in convex position, then $\alpha=1$ and the statement of the theorem holds by Theorem~\ref{thm:convex}. Assume then that $S$ is not in convex position. Therefore, not all points of $S$ lie on a common circle.  
    We apply Theorem~\ref{thm:sepcircle} to the circle polynomial $p_C(z)=\sum_{k=0}^{n-3} c_k z^{k}$ and $\zeta=1.$ Thus, by Proposition~\ref{prop:circleeq}, every circle $C$ passing through point $(1,0)$ and through 
    point $\left(1-\frac{(n-3) {{n}\choose{3}}}{ {{n}\choose{4}}+\overline{cr}(S)},0\right) = \left(1-\frac{4}{1+\alpha},0\right)$ separates at least two zeros of $p_C(z)$. The center of the smallest such circle $C^s$ is
     $(\frac{\alpha-1}{\alpha+1},0)$, and the radius of $C^s$ is $\frac{2}{1+\alpha}.$ Let $D^s$ denote the disk with boundary the circle $C^s$.\\
    The unit disk is contained in $D^s$, touching it in the point $(1,0)$. This already shows that there exists a zero of $p_C(z)$ that lies outside of the unit disk.\\

    From Obrechkoff's inequality, Theorem~\ref{thm:Obrechkoff}, we obtain that the sector $T= \{z \in \mathbb{C}\backslash\{0\} \ | \ \ |arg(z) | < \frac{\pi}{2(n-3)}\}$ is empty of roots of $p_C(z)$.

    By Theorems~\ref{thm:sepcircle} and~\ref{thm:Obrechkoff}, the region $\mathbb{C} \backslash ( D^s \cup T)$ contains a root of $p_C(z)$. Then, the modulus of the largest root of $p_C(z)$ is at least the distance $d$ from the origin to the intersection point in the first quadrant of $C^S$ with the line $y= \tan{\left(\frac{\pi}{2(n-3)}\right)}x$ for $n>4$; see Figure~\ref{lower-bound-pv}.
    
  \begin{figure}[ht!]
  	\centering
  		\includegraphics[scale=0.75]{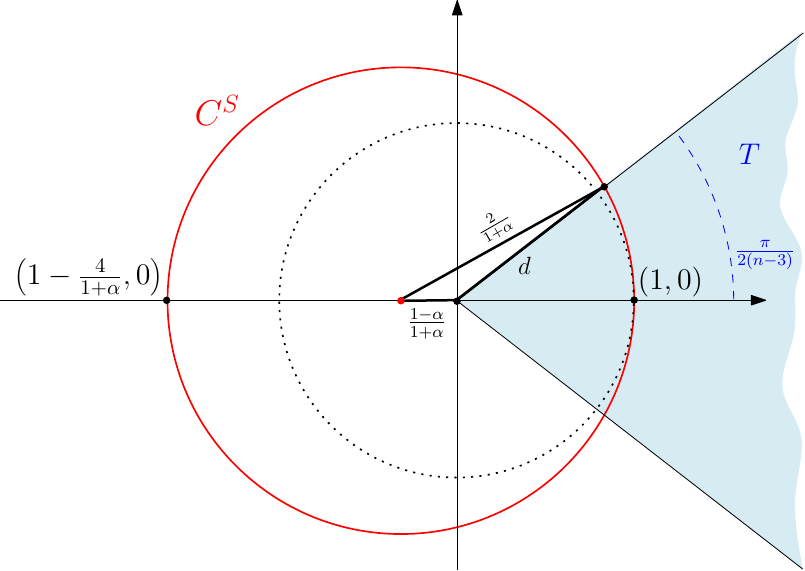}
  	\caption{ The sector $T= \{z \in \mathbb{C}\backslash\{0\} \ | \ \ |arg(z) | < \frac{\pi}{2(n-3)}\}$ is empty of roots of $p_C(z)$. The region $\mathbb{C} \backslash ( D^s \cup T)$ contains a root of $p_C(z)$, with modulus  at least $d$. The unit circle is drawn dotted.
  	}
  	\label{lower-bound-pv}
  \end{figure}
    
    This distance $d$ satisfies, see Figure~\ref{lower-bound-pv}, 
    \[\left(\frac{2}{1+\alpha}\right)^2 = \left(\frac{\alpha-1}{\alpha+1}\right)^2 + d^2 - 2d\frac{1-\alpha}{1+\alpha}\cos{\left(\pi - \frac{\pi}{2(n-3)}\right)}.\]
    Since $\cos{\left(\pi - \frac{\pi}{2(n-3)}\right)}= - \cos{\left(\frac{\pi}{2(n-3)}\right)}$, we get
    \[ d^2 + 2d\frac{1-\alpha}{1+\alpha}\cos{\left( \frac{\pi}{2(n-3)}\right)}+ \frac{\alpha-3}{1+\alpha} =0, \ \mbox{and then}\] 
     \[d=\frac{(-1+\alpha)\cos{\left( \frac{\pi}{2(n-3)}\right)}+\sqrt{ \cos^2{\left( \frac{\pi}{2(n-3)}\right)}(-1+\alpha)^2 -\alpha^2 +2\alpha+3}  }{1+\alpha}.\]
    For $\alpha$ fixed, the Taylor polynomial of \[f(x)=\frac{(-1+\alpha)\cos{x}+\sqrt{ \cos^2{x}(-1+\alpha)^2 -\alpha^2 +2\alpha+3}  }{1+\alpha}\] of degree four at the point $0$ is
    \[1+\frac{1-\alpha}{4}x^2+\frac{-3\alpha^3+5\alpha^2-17\alpha+5}{192}x^4.\]

    Then \[d=f\left( \frac{\pi}{2(n-3)} \right) = 1+ \frac{(1-\alpha)\pi^2}{16(n-3)^2} +O\left( \frac{1}{n^4} \right).\]

\end{proof}

\begin{figure}[ht!]
	\centering
		\includegraphics[scale=0.37]{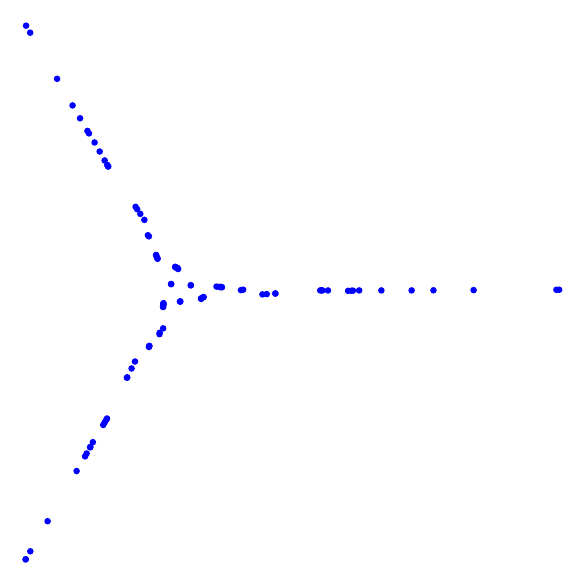}\quad
		\includegraphics[scale=0.37]{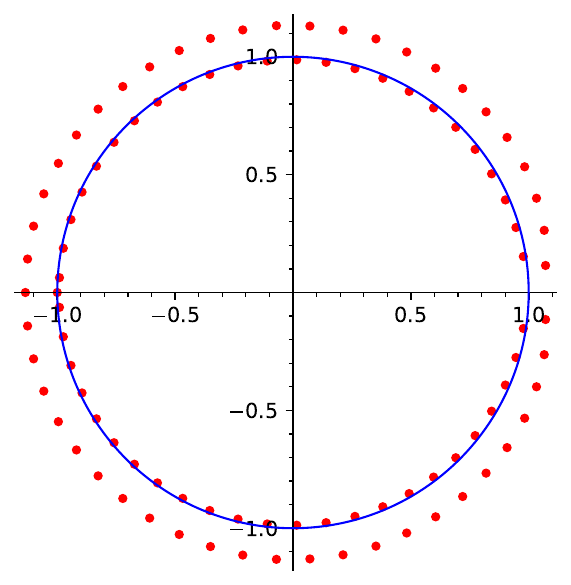}\quad
	\includegraphics[scale=0.37]{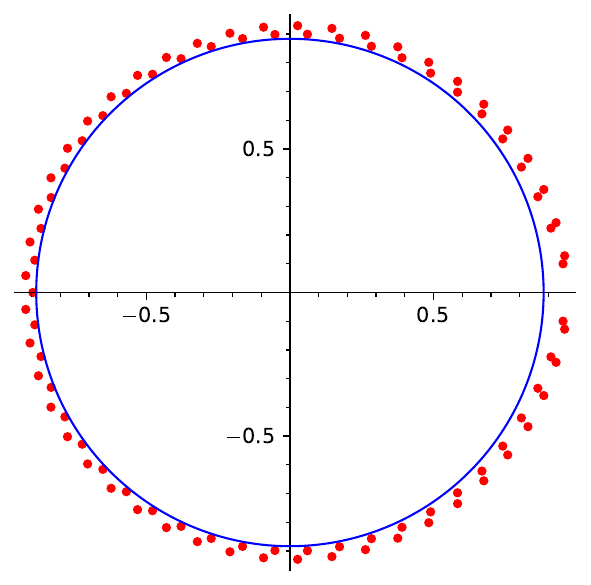}
 \caption{\small Left: Best known example for 
        minimizing $\overline{cr}(S)$ for $|S|=100$, from ~\cite{Aurl}. Roots of its Voronoi (center), and $E_{\leq k}$ (right) polynomials, with circles illustrating, respectively, the bounds of Theorems~\ref{theorem:VPRootModulus} and~\ref{theorem:atmjedgeRootModulusBound}.}
	\label{fig:100}
\end{figure}

We further show that the Voronoi polynomial $p_V(z)$ has a root close to point $1$ in the complex plane. For this, we use Theorem 1.2 from~\cite{MS19}.

\begin{theorem}[Michelen and Sahasrabudhe, 2019]\label{thm:michelen}
Let $X \in \{0,\ldots, n\}$ be a random variable with mean $\mu$, standard deviation $\sigma$ and probability generating function $f_X$ and set $X^* = (X-\mu)\sigma^{-1}.$ If $\delta \in (0,1)$ is such that $|1-\zeta| \geq \delta$ for all roots $\zeta$ of $f_X$ then
\begin{equation}\label{eqn:normal}
 \sup_{t \in \mathbb{R}} | \mathbb{P} (X^* \leq t) - \mathbb{P}(Z \leq t) | =O\left( \frac{\log{n}}{\sigma \delta} \right), 
\end{equation}
where $Z \sim N(0,1).$  
\end{theorem}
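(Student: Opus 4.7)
The plan is to combine Esseen's smoothing inequality with a root-based estimate of the characteristic function of $X^*$, in the spirit of classical Berry--Esseen arguments. Normalising so that $f_X(1) = 1$, one may write
\begin{equation*}
f_X(z) = \prod_{j} \frac{1 - z/\zeta_j}{1 - 1/\zeta_j},
\end{equation*}
where the product ranges over the roots $\zeta_j$ of $f_X$. The characteristic function of $X^*$ is then $\phi_{X^*}(t) = e^{-it\mu/\sigma} f_X(e^{it/\sigma})$, and Esseen's inequality yields, for any $T > 0$,
\begin{equation*}
\sup_{t \in \mathbb{R}} |\mathbb{P}(X^* \leq t) - \mathbb{P}(Z \leq t)| \;\leq\; \int_{-T}^{T} \frac{|\phi_{X^*}(s) - e^{-s^2/2}|}{|s|}\, ds \,+\, \frac{C}{T},
\end{equation*}
so it suffices to estimate $\phi_{X^*}(s) - e^{-s^2/2}$ on a window of the right width.

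The heart of the argument is a uniform Gaussian-type bound on $|\phi_{X^*}(s)|$ for $|s| \leq T := c\sigma\delta/\log n$. Taking logarithms and using the identity for $\sigma^2$ obtained by twice differentiating $\log f_X$ at $z = 1$ (which expresses $\sigma^2$ as a sum over the roots of quantities of magnitude comparable to $1/|1-\zeta_j|^2$), a Taylor expansion of each factor $\log|1 - e^{is/\sigma}/\zeta_j|^2$ in $s/\sigma$ around $0$ should yield
\begin{equation*}
\log|\phi_{X^*}(s)|^2 \;=\; -s^2\bigl(1 + R(s)\bigr),
\end{equation*}
where $R(s)$ collects the cubic and higher remainder terms summed over $j$. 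For each root, the remainder is controlled in terms of $1/|1-\zeta_j|$, and the hypothesis $|1-\zeta_j| \geq \delta$ then allows one to bound $R(s)$ by a constant times $|s|/(\sigma\delta)$ uniformly in $s$, hence $R(s) = o(1)$ well within the chosen window.

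From this, one obtains both $|\phi_{X^*}(s)| \leq e^{-s^2/4}$ on $[-T,T]$ and, via the same Taylor expansion at third order, $|\phi_{X^*}(s) - e^{-s^2/2}| \leq C|s|^3 e^{-s^2/4}/(\sigma\delta)$. Substituted into Esseen's inequality, the integral contributes $O(1/(\sigma\delta))$ and the boundary term $C/T$ contributes $O(\log n/(\sigma\delta))$, yielding the stated bound. The main obstacle will be the uniform control of the cubic Taylor remainder across all roots simultaneously: one has to aggregate cubic errors over an arbitrary collection of roots lying anywhere outside the open disk $\{|1-z| < \delta\}$, with the only available leverage being $\delta$ together with the variance $\sigma^2$. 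The rest is standard Fourier-analytic machinery.
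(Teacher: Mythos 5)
This statement is quoted, not proved, in the paper: it is Theorem~1.2 of Michelen and Sahasrabudhe~\cite{MS19}, cited as an external tool, so there is no in-house proof to compare your sketch against. Judged on its own terms, your outline correctly identifies the skeleton of the actual argument --- Esseen's smoothing inequality applied to $\phi_{X^*}(s)=e^{-is\mu/\sigma}f_X(e^{is/\sigma})$ with $f_X$ written as a product over its roots --- but the step you yourself flag as ``the main obstacle'' is not a technical detail to be filled in later; it is essentially the entire content of the theorem, and the route you propose through it does not go through as stated.

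Concretely: writing $\theta=s/\sigma$, the quadratic term of $\log f_X(e^{i\theta})$ is governed by $\kappa_2=\sigma^2=\sum_j\operatorname{Re}\bigl(-\zeta_j(1-\zeta_j)^{-2}\bigr)$, and your cubic remainder aggregates quantities of size $|1-\zeta_j|^{-3}$. Your plan is to bound $\sum_j|1-\zeta_j|^{-3}$ by $\delta^{-1}\sum_j|1-\zeta_j|^{-2}$ and then by $O(\sigma^2/\delta)$, which would indeed give $R(s)=O(|s|/(\sigma\delta))$. But the individual summands $\operatorname{Re}\bigl(-\zeta_j(1-\zeta_j)^{-2}\bigr)$ need not be nonnegative (only their total is), so $\sum_j|1-\zeta_j|^{-2}$ is \emph{not} termwise dominated by $\sigma^2$: cancellation between roots is possible, and then both the Gaussian upper bound $|\phi_{X^*}(s)|\le e^{-s^2/4}$ and the cubic error estimate collapse. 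Overcoming exactly this difficulty --- exploiting the nonnegativity of the coefficients of $f_X$ and the bounded support $\{0,\dots,n\}$ (which is where the $\log n$ actually enters) to relate sums of the form $\sum_j|1-\zeta_j|^{-k}$ to $\sigma$ and $\delta$ --- is what occupies most of~\cite{MS19}. As written, your proposal is an accurate description of the strategy together with an honest identification of the gap, but it does not close that gap, so it cannot be counted as a proof. (A minor further point: the factorization $f_X(z)=\prod_j(1-z/\zeta_j)/(1-1/\zeta_j)$ presupposes $\mathbb{P}(X=0)\neq 0$; otherwise some $\zeta_j=0$ and one should normalize via $f_X(z)=c\prod_j(z-\zeta_j)$ instead.)
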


\begin{theorem}\label{thm:normal1}
    Let $\alpha$ be a constant from $(0,1]$ and let $S$ be a set of $n$ points in general position in the plane with $\frac{\overline{cr}(S)}{{{n}\choose{4}}}=\alpha$.
    Then, the Voronoi polynomial of $S$, $p_V(z)=\sum_{k=1}^{n-1} v_k z^{k-1}$, has a root $\zeta$ such that $|1-\zeta |  \in  o\left(\frac{log{ (n) }}{n}\right).$
\end{theorem}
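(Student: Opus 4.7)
The plan is to interpret $p_C(z)/\binom{n}{3}$ as the probability generating function of the random variable $X$ that counts the number of points of $S$ enclosed by a circle through three uniformly chosen points of $S$, and then apply the contrapositive of Theorem~\ref{thm:michelen}. Since Proposition~\ref{prop:equalpoly} gives $p_V(z)=(1+z)p_C(z)$, locating a root of $p_C$ near $z=1$ automatically locates a root of $p_V$ near $z=1$.

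The first step is to extract the mean and variance of $X$ from the logarithmic derivatives of its PGF at $z=1$ using Proposition~\ref{prop:circleeq}. This gives $\mu = p'_C(1)/p_C(1) = (1+\alpha)(n-3)/4 = \Theta(n)$, and a routine expansion of $\sigma^2 = p''_C(1)/p_C(1) + \mu - \mu^2$ yields
\[
\sigma^2 = \frac{-5\alpha^2+10\alpha-1}{80}\, n^2 + O(n).
\]
The leading coefficient is positive precisely for $\alpha \in (1-2/\sqrt{5},\, 1+2/\sqrt{5})$; in particular it is positive for every realizable $\alpha \geq \alpha^* > 0.37997$, so $\sigma = \Theta(n)$.

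The second step is to produce a positive, $n$-uniform lower bound on the Kolmogorov distance between $X^* = (X-\mu)/\sigma$ and $Z \sim N(0,1)$. Since $X$ takes values in $\{0,1,\dots,n-3\}$ and both $\mu$ and $\sigma$ are linear in $n$ with explicit leading constants, the support of $X^*$ is contained in an interval $[-B_\alpha + o(1),\, A_\alpha + o(1)]$ whose endpoints depend only on $\alpha$. Therefore $\mathbb{P}(X^* \leq t) = 1$ as soon as $t > A_\alpha + o(1)$, while $\mathbb{P}(Z \leq t) < 1$ for every finite $t$, giving
\[
\sup_{t \in \mathbb{R}} \bigl| \mathbb{P}(X^* \leq t) - \mathbb{P}(Z \leq t) \bigr| \ \geq\ 1 - \Phi(A_\alpha) + o(1) \ \geq\ c_\alpha \ >\ 0
\]
for some constant $c_\alpha$ depending only on $\alpha$. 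In words, $X^*$ does \emph{not} approximate a standard normal.

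Finally, I would invoke the contrapositive of Theorem~\ref{thm:michelen}: if every root $\zeta$ of $p_C$ satisfied $|1-\zeta| \geq \delta_n$, the Kolmogorov distance would be $O(\log n / (\sigma \delta_n))$; combined with the constant lower bound $c_\alpha$ and $\sigma = \Theta(n)$, this forces $\delta_n \in o(\log n / n)$. This produces the desired root of $p_C$, hence of $p_V$. The main obstacle I anticipate is the variance computation together with the verification that its leading coefficient is strictly positive on the admissible range of $\alpha$; the subsequent boundedness argument for $X^*$ and the inversion of the Michelen--Sahasrabudhe inequality are then routine.
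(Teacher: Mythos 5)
Your proposal follows the paper's overall strategy---interpret $p_C(z)/\binom{n}{3}$ as the probability generating function of $X$, compute $\mu$ and $\sigma$ from Proposition~\ref{prop:circleeq}, show that $X^*$ is not asymptotically standard normal, and invert Theorem~\ref{thm:michelen}---but your step certifying non-normality is genuinely different. The paper fixes $t=0$ and lower-bounds $\mathbb{P}(X\le\mu)$ coefficient-by-coefficient via Equation~(\ref{inequ1}), obtaining $\mathbb{P}(X\le\mu)\ge\frac{(5-\alpha)(1+\alpha)^2}{32}-O(1/n)$ and comparing with $\Phi(0)=\tfrac12$, treating $\alpha=1$ separately via concavity of the coefficient sequence. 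You instead observe that $X^*$ has support contained in a bounded interval $[-B_\alpha+o(1),\,A_\alpha+o(1)]$ because $\mu$ and $\sigma$ are both linear in $n$, whence $\sup_t|\mathbb{P}(X^*\le t)-\Phi(t)|\ge 1-\Phi(A_\alpha)+o(1)>0$. Your variant is arguably cleaner: it needs no case distinction at $\alpha=1$, and it sidesteps the delicate point that the paper's lower bound $\frac{(5-\alpha)(1+\alpha)^2}{32}$ is itself below $\tfrac12$ for $\alpha<1$, so a one-sided bound on $\mathbb{P}(X\le\mu)$ does not by itself separate that probability from $\tfrac12$. You are also more explicit than the paper in checking that the leading coefficient $\frac{-5\alpha^2+10\alpha-1}{80}$ of $\sigma^2$ is positive on the realizable range of $\alpha$, which the application of Theorem~\ref{thm:michelen} silently requires.

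One caveat, which your write-up shares with the paper's own proof: inverting $c_\alpha\le O\!\left(\log n/(\sigma\delta)\right)$ with $\sigma=\Theta(n)$ yields $\delta=O(\log n/n)$, not $\delta\in o(\log n/n)$ as you assert in your final paragraph (and as the theorem states); the constant coming from $c_\alpha$ and from the leading coefficient of $\sigma$ does not disappear. Equivalently, the argument shows that for every $\delta$ with $\delta n/\log n\to\infty$ there is a root within $\delta$ of $1$, i.e.\ the nearest root is at distance $O(\log n/n)$ from $1$. The paper makes the identical leap (its claim that $\sigma\delta(\log n)^{-1}\to\infty$ for $\delta\in o(\log n/n)$ has the direction reversed), so this is not a defect of your approach relative to the paper's, but as a standalone proof of the stated $o(\cdot)$ bound the last step does not close; you should state the conclusion as a big-$O$ bound.
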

\begin{proof}
    Let $X$ be the random variable that counts the number of points of $S$ enclosed by the circle defined by three points chosen uniformly at random from $S$. The probability generating function $f_X$ of $X$ is $p_C(z)/{{n}\choose{3}}$. The roots of $f_X$ are also roots of $p_V(z)$, by Proposition~\ref{prop:equalpoly}.
    From Equations~(\ref{eq:main}) and~(\ref{eq:mainsquare}), see~\cite{circle12}, the mean of $X$ is $\mu= \sum_{k=0}^{n-3} k \cdot \frac{c_k}{{{n}\choose{3}}}=\frac{p'_C(1)}{p_C(1)}=\frac{(1+\alpha)\cdot(n-3)}{4}$ and the standard deviation of $X$ is 
    $\sigma = \sqrt{ \frac{p''_C(1)+p'_C(1)}{p_C(1)}- \left( \frac{p'_C(1)}{p_C(1)}  \right)^2 } =(n-3)\cdot \sqrt{\frac{\alpha}{8} - \frac{\alpha^2}{16}-\frac{1}{80}+\frac{1}{5(n-3)}}.$
    For $\delta \in  o\left( \frac{log{(n)}}{n} \right)$, we have  $ \sigma \delta (\log n)^{-1} \rightarrow \infty$. 
    In order to apply Theorem~\ref{thm:michelen}, it remains to show that $X$ does not approach a normal distribution when $n$ tends towards infinity.
    For $t=0$ in Equation~(\ref{eqn:normal}), we get from Proposition~\ref{prop:circleeq},
    \[P(X^* \leq 0) = \mathbb{P}(X \leq \mu) = \mathbb{P}\left (X \leq \frac{(1+\alpha)(n-3)}{4}\right).\] 
    By Equation~(\ref{inequ1}), $\mathbb{P}(X=k) \geq \frac{ (k+1)(n-k-2)}{ {{n}\choose{3}}}$ holds for $k< \frac{n-3}{2}.$
    Then 
    \[\mathbb{P}(X \leq \mu) \geq \sum_{k=0}^{(1+\alpha)(n-3)/4} \frac{ (k+1)(n-k-2)}{ {{n}\choose{3}}}
    \geq \frac{ (5-\alpha)(1+\alpha)^2}{32}-O\left(\frac{1}{n}\right).\]
    For $\alpha < 1$, 
    \[\lim_{n\rightarrow \infty}  | \mathbb{P} (X^* \leq 0) - \mathbb{P}(Z \leq 0) | = \left| \frac{(5-\alpha)(1+\alpha)^2}{32}-O\left(\frac{1}{n}\right) -\frac{1}{2}\right| >0,\]
     but  $\lim_{n\rightarrow \infty} \frac{\log{n}}{\sigma \delta} =0.$ Then, Equation~(\ref{eqn:normal}) does not hold for $n$ large enough. It follows that there exists a root $\zeta$ of $f_X$ with 
    $|1-\zeta| < \delta$.
    
    For $\alpha=1$, we know $f_X$ precisely. $S$ is a set of $n$ points in convex position and $c_k=(k-1)(n-k-2)$ for every $0 \leq k \leq n-3$. By Equation~(\ref{eqn:concave}), $f_X$ is a concave function. Then $X$ does not approach a normal distribution also for $\alpha=1.$ It follows that there exists a root $\zeta$ of $f_X$ with 
    $|1-\zeta| < \delta$.
\end{proof}

Note that on the other hand, Obrechkoff's Theorem~\ref{thm:Obrechkoff} implies that no root of the Voronoi polynomial can be too close to point 1 in the complex plane. Indeed, the largest disk centered at 1 and contained in the sector $T$ bounded by the two lines $y=\pm \tan \left( \frac {\pi} {2(n-3)} \right)$ in the right half-plane (see Figure \ref{lower-bound-pv}) given by Obrechkoff's Theorem, has radius $r=\sin \left(\frac {\pi} {2(n-3)}\right)$. Then, the distance of the closest root to point 1 is at least  $r\geq \sin\left( \frac {\pi} {2(n-3)} \right)\geq \frac {1} {n-3}$.

In the following, we study the location of the roots of the $E_{\leq k}$ polynomial $p_{E}(z) = \sum_{k=0}^{n-3} E_{\leq k} z^k$ of a point set $S$. Note that its coefficients $E_{\leq k}$ form an increasing sequence of positive numbers. The well-known Eneström-Kakeya theorem,~\cite{kakeya12}, tells us that all the roots of $p_{E}(z)$ are contained in the unit disk, and more precisely, that they are contained in an annulus: The absolute values of the roots of $p_{E}(z)$ lie between the greatest and the least of the $n-3$ quotients
\[\frac{E_{\leq n-4} }{E_{\leq n-3}}, \frac{E_{\leq n-5} }{E_{\leq n-4}}, \ldots, \frac{E_{\leq 1} }{E_{\leq 2}},\frac{E_{\leq 0} }{E_{\leq 1}}.\]

We give a lower bound on the largest modulus of the roots of the $E_{\leq k}$ polynomial.

\begin{theorem}
    Let $S$ be a set of $n>3$ points in general position in the plane, with rectilinear crossing number $\overline{cr}(S)=\alpha\cdot{{n}\choose{4}}$. 
		Then, the $E_{\leq k}$ polynomial of $S$, $p_{E}(z) = \sum_{k=0}^{n-3} E_{\leq k} z^k$, has a root of modulus at least 
		$\frac{3+\alpha}{9-\alpha}.$

\end{theorem}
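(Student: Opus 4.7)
The plan is to follow the same strategy as in Theorem~\ref{theorem:VPRootModulus}: apply Laguerre's separation theorem (Theorem~\ref{thm:sepcircle}) to $p_E(z)$ at the point $\zeta = 1$, and then read off a lower bound on the modulus of some root from the geometry of the resulting separating circle.

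First, I would compute $\zeta - (n-3)\,p_E(\zeta)/p_E'(\zeta)$ at $\zeta = 1$ using the formulas from Proposition~\ref{prop:edgepoly}, namely $p_E(1) = 3\binom{n}{3}$ and $p_E'(1) = (9-\alpha)\binom{n}{4}$, together with the identity $(n-3)\binom{n}{3} = 4\binom{n}{4}$. A short simplification gives the point $-\frac{3+\alpha}{9-\alpha}$ on the real axis. By Theorem~\ref{thm:sepcircle}, every circle through $(1,0)$ and $\left(-\frac{3+\alpha}{9-\alpha},0\right)$ either separates two zeros of $p_E$ or contains all of them.

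Second, I would choose $C^s$ to be the smallest such circle, namely the one having these two points as a diameter; its center then sits at $\left(\frac{3-\alpha}{9-\alpha},0\right)$ and its radius equals $\frac{6}{9-\alpha}$. For $\alpha \in [0,1]$ the origin lies strictly inside the closed disk bounded by $C^s$, since the distance $\frac{3-\alpha}{9-\alpha}$ from the origin to the center is smaller than the radius $\frac{6}{9-\alpha}$. By the reverse triangle inequality, any point lying outside $C^s$ is at distance at least $\frac{6}{9-\alpha} - \frac{3-\alpha}{9-\alpha} = \frac{3+\alpha}{9-\alpha}$ from the origin.

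Finally, Theorem~\ref{thm:sepcircle} guarantees at least one zero of $p_E(z)$ outside $C^s$, and this zero therefore has modulus at least $\frac{3+\alpha}{9-\alpha}$; in the degenerate case where all zeros lie on $C^s$, the same bound still holds since every point of $C^s$ is at distance at least $\frac{3+\alpha}{9-\alpha}$ from the origin. I don't foresee a serious obstacle: the only mildly delicate point is checking that the origin lies strictly inside $D^s$ so that ``exterior to $C^s$'' really translates into modulus at least $r - d$, but this is immediate throughout the relevant range of $\alpha$.
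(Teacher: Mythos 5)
Your argument is correct, but it is a genuinely different route from the one in the paper. The paper works with the logarithmic derivative: it writes $\frac{p_E'(1)}{p_E(1)}=\sum_j \frac{1}{1-a_j}$ as in Equation~(\ref{eqn:edgeroots}), pairs conjugate roots to obtain Equation~(\ref{eqn:useful}), invokes the Enestr\"om--Kakeya theorem to ensure $|a_j|<1$ so that the elementary inequality~(\ref{ineqn:modulusbound}) applies, and then solves for the largest modulus. You instead transplant the Laguerre-separation argument from the proof of Theorem~\ref{theorem:VPRootModulus}: your computation of the derived point $1-(n-3)p_E(1)/p_E'(1)=-\frac{3+\alpha}{9-\alpha}$ is right, the smallest separating circle has center $\frac{3-\alpha}{9-\alpha}$ and radius $\frac{6}{9-\alpha}$, and since the origin lies inside it, any zero outside (or on) that circle has modulus at least $\frac{3+\alpha}{9-\alpha}$ --- remarkably, exactly the paper's bound. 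Your approach buys independence from Enestr\"om--Kakeya and, as in Theorem~\ref{theorem:VPRootModulus}, could be sharpened by intersecting the exterior of $D^s$ with the root-free sector from Obrechkoff's Theorem~\ref{thm:Obrechkoff}; the paper's approach is more elementary and produces the identity~(\ref{eqn:useful}), which it reuses in the discussion. Two small points you should make explicit: Theorem~\ref{thm:sepcircle} requires $\zeta=1$ to be neither a zero nor a critical point, which follows from $p_E(1)=3\binom{n}{3}>0$ and $p_E'(1)=(9-\alpha)\binom{n}{4}>0$ by Proposition~\ref{prop:edgepoly}; and it requires degree $n-3\geq 2$, so the case $n=4$ (where $p_E$ is linear with root $-E_{\leq 0}/E_{\leq 1}$) must be checked separately, which is immediate.
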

\begin{proof}
    From Proposition~\ref{prop:edgepoly} we get
    \begin{equation}\label{eqn:edgeroots}
        \frac{p_E'(1)}{p_E(1)}= \frac{ 9{{n}\choose{4}} - \overline{cr}(S)  }{ 3{{n}\choose{3}} } = 
         \frac{ (9-\alpha)\cdot(n-3) }{ 12 }  = \sum_{j=1}^{n-3}\frac{1}{1-a_j},
    \end{equation}
    where the $a_j$, for $j=1,\ldots,n-3,$ are the roots of $p_E(z)$.\\
    
    The conjugate of a complex number $w=a+ib$ is denoted as $\overline{w}=a-ib$.
    The following identity which holds for any complex number $w \neq 1$ is easily verified:
    \begin{equation}\label{eqn:conjugateroots}
        \frac{1}{1-w} + \frac{1}{1-\overline{w}} = 1+ \frac{ 1-|w|^2 }{| 1- w|^2}.
    \end{equation}
    Since all the coefficients of $p_E(z)$ are real numbers, the non-real roots of $p_E(z)$ come in pairs, $a_j$ together with its conjugate $\overline{a_j}.$ We apply Equation~(\ref{eqn:conjugateroots}) to each such pair $w=a_j$ and $\overline{w}=\overline{a_j}.$ For each real root $a_j$ of $p_E(z)$, Equation~(\ref{eqn:conjugateroots}) gives
    \[\frac{1}{1-a_j} = \frac{1}{2}\left(1 + \frac{ 1-|a_j|^2 }{ | 1- a_j|^2}\right).
    \]
    Then, 
    \[\sum_{j=1}^{n-3}\frac{1}{1-a_j} = \sum_{j=1}^{n-3} \frac{1}{2}\left(1 + \frac{ 1-|a_j|^2 }{ | 1- a_j|^2}\right).\]
    We substitute into Equation~(\ref{eqn:edgeroots}) and obtain
    \begin{equation}\label{eqn:useful}
        \frac{(n-3)\cdot(3-\alpha)}{6} = \sum_{j=1}^{n-3} \frac{ 1-|a_j|^2 }{ | 1- a_j|^2}.
    \end{equation}
    The next inequality, which holds for every complex number $w\neq 1$ with modulus $|w| < 1$ is easily verified.
    \begin{equation}\label{ineqn:modulusbound}
        \frac{1-|w|^2}{|1-w|^2} \geq \frac{1-|w|}{1+|w|} 
    \end{equation}
    Since the coefficients of $p_{E}(z)$ form an increasing sequence of positive numbers, the roots $a_j$ of $p_E(z)$ all satisfy $|a_j|<1.$ We use Inequality~(\ref{ineqn:modulusbound}) in Equation~(\ref{eqn:useful}).  Then,
    \[
    \frac{(n-3)\cdot(3-\alpha)}{6} \geq (n-3)\cdot \min_{ j=1,\ldots, n-3} \frac{1-|a_j|}{1+|a_j|}. 
    \]
    Note that the function $f(x)=\frac{1-x}{1+x}$ is decreasing for $x \in (0,1).$ Then, the minimum of $\frac{1-|a_j|}{1+|a_j|}$ is attained when $|a_j|$ is maximum.
		It follows that $ |a_{max}| = \max\limits_{j=1,\ldots, n-3}|a_j| \geq \frac{3+\alpha}{9-\alpha}$, 
		by  solving for $|a_{max}|$ in 
		\begin{equation}\label{eq:almost}
		    \frac{(n-3)\cdot(3-\alpha)}{6} \geq (n-3)\cdot  \frac{1-|a_{max}|}{1+|a_{max}|}.
		\end{equation}
This completes the proof. 
\end{proof}

Next, we show a better lower bound on the largest modulus of $p_E(z)$ when $n$ is large enough.
For this, we use the following theorem of \cite{T39} (page 171), also see \cite{GS13}, Theorem A.

\begin{theorem}[Gardner and Shields, 2013]\label{thm:Titchmarsh}
    Let $F(z)$ be analytic in $|z| \leq R.$ Let $|F(z)| \leq M$ in the disk $|z| \leq R$ and suppose $F(0) \neq 0$. Then, for $0 < \delta <1$, the number of zeros of $F(z)$ in the disk $|z| \leq \delta R$ is less than
    \[\frac{1}{\log{\frac{1}{\delta}}} \log{\frac{M}{|F(0)|}}.
    \]
\end{theorem}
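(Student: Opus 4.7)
The plan is to derive this counting estimate from Jensen's formula, which is the classical quantitative tool relating the zeros of an analytic function inside a disk to its boundary values. After an infinitesimal perturbation of $R$ to avoid zeros on the circle $|z|=R$, Jensen's formula applied to $F$ yields
\[
\log|F(0)| + \sum_{k=1}^{N} \log\frac{R}{|a_k|} \;=\; \frac{1}{2\pi}\int_0^{2\pi} \log|F(Re^{i\theta})|\,d\theta,
\]
where $a_1,\dots,a_N$ are the zeros of $F$ in $|z|<R$, listed with multiplicity. Since $|a_k|\leq R$, every term on the left is nonnegative, and this is the identity I would exploit.

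Next I would compare both sides to the quantities appearing in the statement. On the left, if $n$ denotes the number of zeros in the smaller disk $|z|\leq \delta R$, then each such zero $a_k$ satisfies $\log(R/|a_k|)\geq \log(1/\delta)$, so discarding the nonnegative contributions from zeros in the annulus $\delta R<|a_k|<R$ gives
\[
n\log\frac{1}{\delta} \;\leq\; \sum_{k=1}^{N}\log\frac{R}{|a_k|}.
\]
On the right, the hypothesis $|F|\leq M$ on the closed disk bounds the boundary average of $\log|F|$ by $\log M$, hence
\[
\sum_{k=1}^{N}\log\frac{R}{|a_k|} \;\leq\; \log M - \log|F(0)| \;=\; \log\frac{M}{|F(0)|}.
\]
Chaining the two inequalities immediately produces $n \leq \log(M/|F(0)|)/\log(1/\delta)$, which is the bound up to strictness.

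The main obstacle I anticipate is upgrading the $\leq$ to the strict $<$ stated in the theorem. Equality can occur only in a rigid extremal situation: there would be no zeros in the annulus $\delta R<|z|<R$ (whose omitted positive contribution would otherwise give slack in the first inequality), and $|F(Re^{i\theta})|$ would have to equal $M$ almost everywhere on $|z|=R$ (giving equality in the second). The standard way to rule this out is to factor $F=B\cdot G$, where $B$ is the finite Blaschke product (rescaled to the disk of radius $R$) associated with the zeros inside $|z|<\delta R$; then $G$ is zero-free there, satisfies $|G|\leq M$ on $|z|=R$ by $|B|=1$ on the boundary, and the maximum modulus principle together with the non-extremality of a generic $F$ forces strict inequality. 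If desired, a slight perturbation of $\delta$ (or of $R$, again infinitesimally) converts the non-strict bound into the strict one with no loss.
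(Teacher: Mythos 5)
This is a quoted classical result (Titchmarsh; Gardner--Shields, Theorem~A) and the paper gives no proof of it, so there is no internal argument to compare against; I assess your proof on its own. Your route via Jensen's formula is the standard one and its core is correct: with $a_1,\dots,a_N$ the zeros of $F$ in $|z|<R$ (shrinking $R$ infinitesimally if some zero lies on the boundary circle), Jensen's identity together with $|F|\le M$ on $|z|=R$ gives
\[
\sum_{k=1}^{N}\log\frac{R}{|a_k|}\;=\;\frac{1}{2\pi}\int_0^{2\pi}\log|F(Re^{i\theta})|\,d\theta-\log|F(0)|\;\le\;\log\frac{M}{|F(0)|},
\]
and keeping only the $n$ zeros with $|a_k|\le\delta R$, each contributing at least $\log(1/\delta)$, yields $n\log(1/\delta)\le\log\bigl(M/|F(0)|\bigr)$. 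That is exactly the stated bound with ``$\le$'' in place of ``$<$''.

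The genuine gap is your treatment of strictness, and it cannot be repaired in the generality you aim for, because the strict inequality is actually false in the extremal case. Take $R=1$ and $F(z)=M\,\frac{z-a}{1-\overline{a}z}$ with $|a|=\delta$: then $F$ is analytic on $|z|\le 1$, $|F|\le M$ there with equality on $|z|=1$, $F(0)=-Ma\ne 0$, the closed disk $|z|\le\delta$ contains exactly one zero, and $\frac{1}{\log(1/\delta)}\log\frac{M}{|F(0)|}=1$, so ``less than'' fails. Consequently your proposed fixes (Blaschke factorization plus ``non-extremality of a generic $F$'', or perturbing $\delta$ or $R$) cannot work: genericity is not a hypothesis of the theorem, and perturbing $\delta$ changes the statement being proved. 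The honest conclusion is that the quoted theorem should be read with ``$\le$'', or with ``$<$'' whenever the right-hand side is not an integer (automatic then, since the zero count is an integer). This weaker form is all the paper needs: in Theorem~\ref{theorem:atmjedgeRootModulusBound} the parameter $\delta$ is chosen so that the right-hand side is strictly below $n-3$, whence at most $n-4$ of the $n-3$ roots lie in $|z|\le\delta$ and some root has modulus exceeding $\delta$.
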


\begin{theorem} \label{theorem:atmjedgeRootModulusBound}
    Let $S$ be a set of $n$ points with $h$ of them on the boundary of the convex hull of $S$, in general position in the plane. Then $p_{E}(z) = \sum_{k=0}^{n-3} E_{\leq k} z^k$ has a root of modulus at least
        \[\left( \frac{3  {{n}\choose{3}}} {h}\right)^{-\frac{1}{n-3}}.\]
\end{theorem}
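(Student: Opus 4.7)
The plan is to apply Titchmarsh's Theorem (Theorem~\ref{thm:Titchmarsh}) directly to $F(z) = p_E(z)$ with $R = 1$. First I would identify the two quantities that appear in the bound. Since every coefficient $E_{\leq k}$ of $p_E(z)$ is positive, the maximum modulus of $p_E$ on the closed unit disk is attained at $z=1$, and Proposition~\ref{prop:edgepoly} gives $M = p_E(1) = 3{{n}\choose{3}}$. The constant term is $F(0) = E_{\leq 0} = e_0$, and a $0$-edge is by definition an oriented line through two points of $S$ leaving all other points strictly to its left; these are exactly the $h$ oriented edges of the convex hull of $S$, so $F(0) = h$. In particular $F(0) \neq 0$, so Titchmarsh's hypothesis is satisfied.

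Next, recall from the Enestr\"om--Kakeya discussion just before the statement that all $n-3$ zeros of $p_E$ lie in the closed unit disk; let $r$ denote the largest modulus among them. For any $\delta$ with $r < \delta < 1$, the disk $|z| \leq \delta$ contains all $n-3$ zeros of $p_E$, so Theorem~\ref{thm:Titchmarsh} yields
\[
n-3 \;<\; \frac{1}{\log(1/\delta)}\log\frac{M}{|F(0)|} \;=\; \frac{\log\bigl(3{{n}\choose{3}}/h\bigr)}{\log(1/\delta)}.
\]
Solving this strict inequality for $\delta$ gives $\delta > \bigl(3{{n}\choose{3}}/h\bigr)^{-1/(n-3)}$. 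Since this holds for \emph{every} $\delta \in (r,1)$, and since $\bigl(3{{n}\choose{3}}/h\bigr)^{-1/(n-3)} < 1$ (because $3{{n}\choose{3}} > n \geq h$), letting $\delta \to r^+$ forces
\[
r \;\geq\; \left( \frac{3{{n}\choose{3}}}{h}\right)^{-\frac{1}{n-3}},
\]
which is precisely the claimed bound.

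There is no real obstacle here: the whole argument is a clean application of Titchmarsh's Theorem combined with the value of $p_E(1)$ from Proposition~\ref{prop:edgepoly} and the standard identity $e_0 = h$ from $k$-edge theory. The only care needed is the optimization step that converts Titchmarsh's upper bound on the root-counting function into a lower bound on the largest root modulus, which is a routine logarithmic manipulation.
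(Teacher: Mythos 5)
Your proposal is correct and follows essentially the same route as the paper: both apply Titchmarsh's theorem to $p_E$ on the unit disk with $M=p_E(1)=3\binom{n}{3}$ and $|F(0)|=E_{\leq 0}=h$, and extract the bound by the same logarithmic manipulation (you phrase it contrapositively via the largest root modulus $r$, the paper phrases it by choosing $\delta$ below the critical value, but these are the same argument). The only cosmetic difference is that you justify $E_{\leq 0}=h$ via the $0$-edge/convex-hull correspondence and $M$ via positivity of the coefficients, details the paper leaves implicit.
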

\begin{proof}
    By the maximum modulus principle and Proposition~\ref{prop:edgepoly}, $|p_E(z)| \leq 3{{n}\choose{3}}$ in the disk $|z| \leq 1$, and $p_E(0) = E_{\leq 0} = h \neq 0.$
    We apply Theorem~\ref{thm:Titchmarsh} with a large $\delta$ from the interval $(0,1)$ such that \[\frac{1}{\log{\frac{1}{\delta}}} \log{\frac{3{{n}\choose{3}}  }{h}} < n-3.
    \]
    Then, $p_E(z)$ has at least one root of modulus greater than $\delta.$
    Solving for $\delta$, we obtain that $\delta < \left( \frac{3{{n}\choose{3}}}{h}\right)^{-\frac{1}{n-3}}$.
\end{proof}

For an illustration of Theorem~\ref{theorem:atmjedgeRootModulusBound}, see Figure~\ref{fig:100} (right) on page \pageref{fig:100}.

\section{Discussion}\label{sec:discussion}

We have introduced three polynomials $p_V(z)$, $p_C(z)$, $p_{E}(z)$ for sets $S$ of $n$ points in general position in the plane, showing their connection to $\overline{cr}(S)$ and several bounds on the location of their roots.
The obvious open problem is using bounds on such roots to improve upon the current best bound on the rectilinear crossing number problem. 
Besides, we think that the presented polynomials are interesting objects of study on their own, also given the many applications of Voronoi diagrams.

For some of the formulas presented for one of the polynomials, such as for example Equations~(\ref{eqn:circleroots}) and~(\ref{eqn:useful}), there are analogous statements for the other considered polynomials. These can be derived easily and are omitted.

Further, several other polynomials on point sets can be considered. The reader interested in crossing numbers probably has in mind the $j$-edge polynomial $p_e(z)=\sum_{j=0}^{n-2} e_j z^j$ of a point set $S$. The known formula for the rectilinear crossing number $\overline{cr}(S)$ in terms of the numbers of $j$-edges $e_j$ of $S$, 
see~\cite{LVW04}, Lemma 5, translates into
\[2\overline{cr}(S) -6{{n}\choose{4}} = p''_e(1) - (n-3)p'_e(1).\]  
As is the case for the Voronoi polynomial and the circle polynomial, for sets $S$ of $n$ points in convex position, the $j$-edge polynomial $p_e(z)$ has all its roots on the unit circle. This is readily seen since $p_e(z)$ is then  $n$ times the all ones polynomial, $p_e(z)= n\sum_{j=0}^{n-2} z^j$, as $e_j=n$ for all $j$, if $S$ is in convex position. Its roots are the $(n-1$)th roots of unity, except $z=1.$

For point sets that minimize $\overline{cr(S)}$, the roots of the Voronoi polynomial seem to be close to two circles, such as in the example of Figure~\ref{fig:100}. For arbitrary point sets this phenomenon does not occur, but for $n$ large enough, the roots tend to lie close to the unit circle, also see Theorem 1 in \cite{hughes2008zeros}.

We finally propose to study the presented polynomials for random point sets.  The expected rectilinear crossing number is known for sets of $n$ points chosen uniformly at random from a convex set $K$, for several shapes of $K$, see e.g.~\cite{S04}, Section 1.4.5. pp. 63-64, and the survey of~\cite{AFS12}.

\acknowledgements
\label{sec:ack}
M. C., A. d.-P., C. H., and D. O. were supported by project PID2019-104129GB-I00/MICIU/AEI/10.13039 /501100011033. 
C.H. was supported by project Gen. Cat. DGR 2021-SGR-00266.
D. F.-P. was supported by grants PAPIIT IN120520 and PAPIIT IN115923 (UNAM, México).

\nocite{*}
\bibliographystyle{abbrvnat}
\bibliography{dmtcs}

\begin{thebibliography}{37}
\providecommand{\natexlab}[1]{#1}
\providecommand{\url}[1]{\texttt{#1}}
\expandafter\ifx\csname urlstyle\endcsname\relax
  \providecommand{\doi}[1]{doi: #1}\else
  \providecommand{\doi}{doi: \begingroup \urlstyle{rm}\Url}\fi

\bibitem[{\'A}brego and Fern{\'a}ndez-Merchant(2017)]{AF05}
B.~M. {\'A}brego and S.~Fern{\'a}ndez-Merchant.
\newblock The rectilinear local crossing number of {$K_n$}.
\newblock \emph{Journal of Combinatorial Theory, Series A}, 151:\penalty0
  131--145, 2017.

\bibitem[{\'A}brego et~al.(2012){\'A}brego, Cetina, Fern{\'a}ndez-Merchant,
  Lea{\~n}os, and Salazar]{AC12}
B.~M. {\'A}brego, M.~Cetina, S.~Fern{\'a}ndez-Merchant, J.~Lea{\~n}os, and
  G.~Salazar.
\newblock On $\leq k$-edges, crossings, and halving lines of geometric drawings
  of {$K_n$}.
\newblock \emph{Discrete \& Computational Geometry}, 48\penalty0 (1):\penalty0
  192--215, 2012.

\bibitem[{\'A}brego et~al.(2013){\'A}brego, Fern{\'a}ndez-Merchant, and
  Salazar]{AFS12}
B.~M. {\'A}brego, S.~Fern{\'a}ndez-Merchant, and G.~Salazar.
\newblock The rectilinear crossing number of {$K_n$}: closing in (or are we?).
\newblock \emph{Thirty Essays on Geometric Graph Theory}, pages 5--18, 2013.

\bibitem[Aichholzer(2006{\natexlab{a}})]{Aurl}
O.~Aichholzer.
\newblock On the rectilinear crossing number.
\newblock
  \url{http://www.ist.tugraz.at/staff/aichholzer/research/rp/triangulations/crossing/},
  2006{\natexlab{a}}.
\newblock Accessed: April 2023.

\bibitem[Aichholzer(2006{\natexlab{b}})]{OrderTypesDBurl}
O.~Aichholzer.
\newblock Enumerating order types for small point sets with applications.
\newblock
  \url{http://www.ist.tugraz.at/aichholzer/research/rp/triangulations/ordertypes/},
  2006{\natexlab{b}}.
\newblock Accessed: April 2023.

\bibitem[Aichholzer et~al.(2007)Aichholzer, Garc\'{\i}a, Orden, and
  Ramos]{AGOR07}
O.~Aichholzer, J.~Garc\'{\i}a, D.~Orden, and P.~Ramos.
\newblock New lower bounds for the number of ($\leq k$)-edges and the
  rectilinear crossing number of {$K_n$}.
\newblock \emph{Discrete \& Computational Geometry}, 38\penalty0 (1):\penalty0
  1--14, 2007.

\bibitem[Alon and Gy{\"o}ri(1986)]{AG86}
N.~Alon and E.~Gy{\"o}ri.
\newblock The number of small semispaces of a finite set of points in the
  plane.
\newblock \emph{Journal of Combinatorial Theory, Series A}, 41\penalty0
  (1):\penalty0 154--157, 1986.

\bibitem[Ardila(2004)]{A04}
F.~Ardila.
\newblock The number of halving circles.
\newblock \emph{The American Mathematical Monthly}, 111\penalty0 (7):\penalty0
  586--591, 2004.

\bibitem[Aurenhammer(1991)]{A91}
F.~Aurenhammer.
\newblock {V}oronoi diagrams—a survey of a fundamental geometric data
  structure.
\newblock \emph{ACM Computing Surveys (CSUR)}, 23\penalty0 (3):\penalty0
  345--405, 1991.

\bibitem[Aziz and Mohammad(1980)]{A80}
A.~Aziz and Q.~Mohammad.
\newblock Simple proof of a theorem of {E}rd{\H{o}}s and {L}ax.
\newblock In \emph{Proceedings of the American Mathematical Society},
  volume~80, pages 119--122, 1980.

\bibitem[Balogh and Salazar(2006)]{BS06}
J.~Balogh and G.~Salazar.
\newblock $k$-sets, convex quadrilaterals, and the rectilinear crossing number
  of {$K_n$}.
\newblock \emph{Discrete \& {C}omputational Geometry}, 35:\penalty0 671--690,
  2006.

\bibitem[Chapoton and Han(2020)]{CH20}
F.~Chapoton and G.-N. Han.
\newblock On the roots of the {P}oupard and {K}reweras polynomials.
\newblock \emph{Moscow Journal of Combinatorics and Number Theory}, 9\penalty0
  (2):\penalty0 163--172, 2020.

\bibitem[Chen(1995)]{Chen95}
W.~Chen.
\newblock On the polynomials with all their zeros on the unit circle.
\newblock \emph{Journal of {M}athematical {A}nalysis and {A}pplications},
  190\penalty0 (3):\penalty0 714--724, 1995.

\bibitem[Clarkson and Shor(1989)]{CS89}
K.~L. Clarkson and P.~W. Shor.
\newblock Applications of random sampling in {C}omputational {G}eometry {II}.
\newblock \emph{Discrete \& computational geometry}, 4\penalty0 (5):\penalty0
  387--422, 1989.

\bibitem[Claverol et~al.(2021)Claverol, Huemer, and Mart{\'\i}nez-Moraian]{C19}
M.~Claverol, C.~Huemer, and A.~Mart{\'\i}nez-Moraian.
\newblock On circles enclosing many points.
\newblock \emph{Discrete Mathematics}, 344\penalty0 (10):\penalty0 112541,
  2021.

\bibitem[Claverol et~al.(2024)Claverol, Parrilla, Huemer, and
  Mart{\'\i}nez-Moraian]{CdH21}
M.~Claverol, A.~d. l.~H. Parrilla, C.~Huemer, and A.~Mart{\'\i}nez-Moraian.
\newblock The edge labeling of higher order {V}oronoi diagrams.
\newblock \emph{Journal of Global Optimization}, 2024.
\newblock \doi{https://doi.org/10.1007/s10898-024-01386-0}.

\bibitem[Eremenko and Bergweiler(2015)]{BE15}
A.~Eremenko and W.~Bergweiler.
\newblock Distribution of zeros of polynomials with positive coefficients.
\newblock \emph{Annales Academiae Scientiarum Fennicae Mathematica},
  40:\penalty0 375--383, 2015.

\bibitem[Fabila-Monroy et~al.(2012)Fabila-Monroy, Huemer, and
  Tramuns]{circle12}
R.~Fabila-Monroy, C.~Huemer, and E.~Tramuns.
\newblock The expected number of points in circles.
\newblock In \emph{28th European Workshop on Computational Geometry (EuroCG
  2012)}, pages 69--72, 2012.
\newblock \url{https://www.eurocg.org/2012/booklet.pdf}.

\bibitem[Gardner and Shields(2013)]{GS13}
R.~Gardner and B.~Shields.
\newblock The number of zeros of a polynomial in a disk.
\newblock \emph{Journal of Classical Analysis}, 3\penalty0 (2):\penalty0
  167--176, 2013.

\bibitem[Hughes and Nikeghbali(2008)]{hughes2008zeros}
C.~P. Hughes and A.~Nikeghbali.
\newblock The zeros of random polynomials cluster uniformly near the unit
  circle.
\newblock \emph{Compositio Mathematica}, 144\penalty0 (3):\penalty0 734--746,
  2008.

\bibitem[Kakeya(1912)]{kakeya12}
S.~Kakeya.
\newblock On the limits of the roots of an algebraic equation with positive
  coefficients.
\newblock \emph{Tohoku Mathematical Journal, First Series}, 2:\penalty0
  140--142, 1912.

\bibitem[Laguerre(1878)]{L1878}
E.~Laguerre.
\newblock Sur la r{\'e}solution des {\'e}quations num{\'e}riques.
\newblock \emph{Nouvelles Annales de Math\'ematiques, ser}, 2:\penalty0
  97--101, 1878.

\bibitem[Lakatos and Losonczi(2004)]{LL04}
P.~Lakatos and L.~Losonczi.
\newblock Self-inversive polynomials whose zeros are on the unit circle.
\newblock \emph{Publicationes Mathematicae Debrecen}, 65\penalty0
  (3-4):\penalty0 409--420, 2004.

\bibitem[Lal{\'\i}n and Smyth(2013)]{LS13}
M.~N. Lal{\'\i}n and C.~J. Smyth.
\newblock Unimodularity of zeros of self-inversive polynomials.
\newblock \emph{Acta Mathematica Hungarica}, 138\penalty0 (1):\penalty0
  85--101, 2013.

\bibitem[Lee(1982)]{L82}
D.-T. Lee.
\newblock On $k$-nearest neighbor {V}oronoi diagrams in the plane.
\newblock \emph{IEEE {T}ransactions on {C}omputers}, 100\penalty0 (6):\penalty0
  478--487, 1982.

\bibitem[Lindenbergh(2003)]{L03}
R.~C. Lindenbergh.
\newblock A {V}oronoi poset.
\newblock \emph{Journal for Geometry and Graphics}, 7:\penalty0 41--52, 2003.

\bibitem[Lov{\'a}sz et~al.(2004)Lov{\'a}sz, Vesztergombi, Wagner, and
  Welzl]{LVW04}
L.~Lov{\'a}sz, K.~Vesztergombi, U.~Wagner, and E.~Welzl.
\newblock Convex quadrilaterals and $k$-sets. towards a theory of geometric
  graphs.
\newblock \emph{Contemporary Mathematics}, 342:\penalty0 139--148, 2004.

\bibitem[Malik(1969)]{M69}
M.~Malik.
\newblock On the derivative of a polynomial.
\newblock \emph{Journal of the London Mathematical Society}, 2\penalty0
  (1):\penalty0 57--60, 1969.

\bibitem[Marden(1966)]{Ma66}
M.~Marden.
\newblock \emph{Geometry of polynomials}, volume~3.
\newblock American Mathematical Society Mathematical Surveys, 1966.

\bibitem[Michelen and Sahasrabudhe(2019)]{MS19}
M.~Michelen and J.~Sahasrabudhe.
\newblock Central limit theorems and the geometry of polynomials.
\newblock \emph{arXiv preprint arXiv:1908.09020}, 2019.

\bibitem[Nagy(1933)]{Nagy1933}
J.~v.~S. Nagy.
\newblock {\"U}ber einen {S}atz von {L}aguerre.
\newblock \emph{Journal f{\"u}r die reine und angewandte Mathematik},
  169:\penalty0 186--192, 1933.

\bibitem[Obrechkoff(1923)]{Ob23}
N.~Obrechkoff.
\newblock Sur un probleme de {L}aguerre.
\newblock \emph{Comptes Rendus}, 177:\penalty0 102--104, 1923.

\bibitem[Okabe et~al.(2000)Okabe, Boots, Sugihara, and Chiu]{OBS00}
A.~Okabe, B.~Boots, K.~Sugihara, and S.~N. Chiu.
\newblock \emph{Spatial tessellations: {C}oncepts and applications of {V}oronoi
  diagrams}.
\newblock John Wiley \& Sons, 2000.

\bibitem[Rahman and Schmeisser(2002)]{RS02}
Q.~I. Rahman and G.~Schmeisser.
\newblock \emph{Analytic theory of polynomials}.
\newblock Number~26. Oxford {U}niversity {P}ress, 2002.

\bibitem[Santal{\'o}(2004)]{S04}
L.~A. Santal{\'o}.
\newblock \emph{Integral geometry and geometric probability}.
\newblock Cambridge University Press, 2004.

\bibitem[Titchmarsh(1939)]{T39}
E.~C. Titchmarsh.
\newblock \emph{The theory of functions}.
\newblock Oxford University Press, USA, 1939.

\bibitem[Urrutia(2004)]{U04}
J.~Urrutia.
\newblock A containment result on points and circles.
\newblock \emph{Preprint}, 2004.
\newblock
  \url{https://www.matem.unam.mx/~urrutia/online_papers/PointCirc2.pdf}.

\end{thebibliography}
\label{sec:biblio}

\end{document}